\date{\today}
\newcommand{\ka}{\mathfrak{k}}
\newcommand{\p}{\mathfrak{p}}
\newcommand{\g}{\mathfrak{g}}
\newcommand{\h}{\mathfrak{h}}
\newcommand{\C}{{\ensuremath{\mathbb{C}}}}
\newcommand\Span{\operatorname{span}}
\def\det{\mathop{\hbox {det}}\nolimits}
\def\End{\mathop{\hbox {End}}\nolimits}
\newcommand{\pf}{\begin{proof}}
\newcommand{\epf}{\end{proof}}
\newcommand{\eq}{\begin{equation}}
\newcommand{\eeq}{\end{equation}}
\newcommand{\eqn}{\begin{equation*}}
\newcommand{\eeqn}{\end{equation*}}
\theoremstyle{plain}
\newtheorem{theorem}{Theorem}[section]
\newtheorem{cor}[theorem]{Corollary}
\newtheorem{prop}[theorem]{Proposition}
\newtheorem{lemma}[theorem]{Lemma}
\newtheorem{remark}[theorem]{Remark}
\newtheorem{definition}[theorem]{Definition}
\date{\today}
\title[Algebraic Dirac induction for discrete series of $SU(2, 1)$]{Algebraic Dirac induction for nonholomorphic discrete series of $SU(2, 1)$}
\author{Ana Prli\' c}\thanks{This work was partially supported by a grant from the Croatian Science Foundation.}
\subjclass[2010]{Primary 22E47; Secondary 22E46}
\keywords{Lie group, Lie algebra, discrete series, highest weight, minimal K–-type, Dirac operator, Dirac cohomology, Dirac induction}
\address{Department of Mathematics, University of Zagreb,
Bijeni\v cka cesta 30, 10000 Zagreb, Croatia.}
\email{anaprlic@math.hr}
\begin{document}

\begin{abstract}
In a joint paper P. Pand\v{z}i\' c and D. Renard proved that holomorphic and antiholomorphic discrete series representations can be constructed via algebraic Dirac induction. The group $SU(2,1)$, except for those two types, also has a third type of discrete series representations that are neither holomorphic nor antiholomorphic. In this paper we show that nonholomorphic discrete series representations of the group $SU(2,1)$ can also be constructed using algebraic Dirac induction.
\end{abstract}

\maketitle

\section{Introduction}
Dirac operators were introduced into representation theory by Partha\-sarathy in \cite{P} as a tool for constructing discrete series representations. An algebraic version of the Dirac operator was studied by D. Vogan [V].

Let $G$ be a connected real reductive Lie group with Cartan involution $\Theta$ such that $K = G^{\Theta}$ is a maximal compact subgroup of $G$ and let $\g = \ka \oplus \p$ be the Cartan decomposition of the complexified Lie algebra of $G$ corresponding to
$\Theta$. The Dirac operator $D$ is an element of the algebra $U(\g) \otimes C(\p)$, where $U(\g)$ denotes the universal enveloping
algebra of $\g$ and $C(\p)$ denotes the Clifford algebra of $\p$ with respect to the Killing form. For a $(\g, K)$--module $X$ Vogan defines Dirac cohomology as
$$
H_{V}^{D}(X) = \text{Ker}D/ \text{Im}D \cap \text{Ker}D.
$$
It is a $\tilde{K}$--module, where $\tilde{K}$ is the spin double cover of $K$. If $X$ is unitary, then $H_{V}^{D}(X) = \text{Ker}D = \text{Ker}D^2$.

The main result about Dirac cohomology was conjectured by Vogan \cite{V}, and proved by Huang and Pand\v{z}i\'{c} in \cite{HP}. Roughly speaking, it asserts that the Dirac cohomology, if nonzero, determines the infinitesimal character of the representation.
By now, Dirac cohomology has been calculated for many (unitary) modules (see \cite{DH}, \cite{HKP}, \cite{HPP}, \cite{BP1}, \cite{BP2}).

In \cite{PR} the authors describe certain constructions in the opposite direction which give representations with prescribed Dirac cohomology.  As the name fails to indicate, Dirac cohomology, as defined by Vogan, is not a cohomological theory. In particular, it is a functor which admits no adjoint. In \cite{PR}, two alternative definitions were proposed, which both coincide with the Vogan's definition for unitary and finite dimensional representations.
They are called Dirac cohomology and homology. The functor of the Dirac cohomology is left exact and admits a right adjoint, while the functor of the Dirac homology is right exact and admits a left adjoint. These adjoints are called Dirac induction functors.
One gets a representation with the Dirac cohomology $W$ by tensoring the algebra $U(\g) \otimes C(\p)$ with $W$ over a certain subalgebra. There are several options for the choice of the algebra over which we tensor. A minimal option is the subalgebra of $U(\g) \otimes C(\p)$ generated by the diagonal version of the Lie algebra of the group $K$ and by the Dirac operator. Modules obtained in this way are typically not irreducible. A maximal option is to include in the subalgebra all $K$--invariants in $U(\g) \otimes C(\p)$. A problem with this approach is that, in general, the algebra $(U(\g) \otimes C(\p))^{K}$ is complicated to describe. Therefore, an ``intermediate" option is also introduced, where we tensor over the subalgebra that does not include all $K$--invariants but only $K$--invariants in the algebra $C(\p)$ which is easy to describe. In \cite{PR}, the representations of holomorphic and antiholomorphic discrete series are constructed in this way.

The group $SU(2,1)$, except for holomorphic and antiholomorphic discrete series representations, also has a third type of discrete series which are neither highest weight modules nor lowest weight modules, the nonholomorphic discrete series representations. The irreducible unitary representations of the group $SU(n,1)$, $n \geq 2$ which are not principal series representations are considered in \cite{K}. Each of those representations is uniquely determined by its minimal $K$--type.

The algebra $(U(\g) \otimes C(\p))^{K}$ for the group $SU(2,1)$ is generated by five elements, see \cite{Pr}. Two of them are in the center $Z(\ka)$ of $U(\ka)$ - the Casimir element and the element spanning the center of $\ka$. One of the generators is in another abelian algebra, $C(\p)^K$ (which is three-dimensional in this case). The fourth generator is the Dirac operator, and the fifth generator is another distinguished element that can be thought of as a $\ka$-version of the Dirac operator. We will prove that the nonholomorphic discrete series representations can be obtained using  algebraic Dirac induction where we tensor over the algebra that is generated  by the Lie algebra of the group $K$ and by the whole algebra $(U(\g) \otimes C(\p))^{K}$.

The paper is organized as follows. In Section $2$, we give a brief review of the main definitions and results of \cite{PR}. In Section $3$, using results of \cite{K}, we describe nonholomorphic discrete series representation of $SU(2,1)$ in terms of the highest weight vectors of their minimal $K$--types. In Section $4$, we describe the $K$--module structure of the algebra $U(\g)$. Using this and the action of the algebra $(U(\g) \otimes C(\p))^{K}$ on the Dirac cohomology, we will reduce the obvious set of generators for the induced module to the set of generators which will later be shown to be a basis for the induced module. Finally, in Section $5$ we prove that nonholomorphic discrete series can be obtained via algebraic Dirac induction.

In future, we hope to generalize our results to nonholomorphic discrete series representations of the group $SU(n,1)$ and to some other more complicated examples.

\vspace{.2in}

\section{Dirac induction}
Let $\g_0$ be the Lie algebra of $G$ and let $B$ be an invariant nondegenerate symmetric bilinear form on $\g_{0}$. The Dirac operator $D = U(\g) \otimes C(\p)$ is given by
$$
D = \sum_{i} b_{i} \otimes d_{i},
$$
where $b_i$ is a basis of $\p$ and $d_i$ is the dual basis with respect to $B$. The operator $D$ is independent of the choice of basis
$b_i$ and $K$--invariant for the adjoint action on both factors. The adjoint action of $\ka$ on $\p$ defines a map $\text{ad}: \ka \longrightarrow \mathfrak{so}(\p)$. Composing it with the usual embedding of the Lie algebra $\mathfrak{so}(\p)$ into the Clifford algebra $C(\p)$, we get a Lie algebra map $\alpha: \ka \longrightarrow C(\p)$. Using $\alpha$ we can embed the Lie algebra $\ka$ diagonally into $U(\g) \otimes C(\p)$, by
$$
X \mapsto X_{\Delta} = X \otimes 1 + 1 \otimes \alpha(X).
$$
We will denote $\Delta(\ka)$ by $\ka_{\Delta}$. Let us denote $\mathcal{A} = U(\g) \otimes C(\p)$. Let $\mathcal{I}$ be the two-sided ideal in the algebra of $K$--invariants $\mathcal{A}^{K}$ generated by $D$ and let $\mathcal{B}$ be the $K$--invariant subalgebra of $\mathcal{A}$ with unit, generated by $\ka_{\Delta}$ and $\mathcal{I}$. Let us recall the definitions of the Dirac cohomology and homology from \cite{PR}.
\begin{definition}
Let $X$ be a $(\mathfrak{g}, K)$--module. Dirac cohomology of $X$ is the space of $\mathcal{I}$--invariants in $X \otimes S$. Therefore,
$$
H^{D}(X) = \{ v \in X \otimes S \, | \,  av = 0, \forall a \in \mathcal{I}\}.
$$
Dirac homology of $X$ is the space of $\mathcal{I}$--coinvariants in $X \otimes S$
$$
H_D(X) = X \otimes S / \mathcal{I}(X \otimes S).
$$
\end{definition}

In the most interesting cases when the module $X$ is finite-dimensional or unitary, $H_{D}(X)$, $H^{D}(X)$ and $H_{V}^{D}(X)$ all coincide and they are all equal to $\text{Ker}(D) = \text{Ker}(D^2)$.

The functor of Dirac cohomology $H^{D} : \mathcal{M}(\mathcal{A}, \tilde{K}) \longrightarrow \mathcal{M}(\ka_{\Delta}, \tilde{K})$ has a left adjoint, the functor
$$
\text{Ind}_D : W \mapsto \mathcal{A} \otimes_{\mathcal{B}}W.
$$
The functor of Dirac homology $H_{D} : \mathcal{M}(\mathcal{A}, \tilde{K}) \longrightarrow \mathcal{M}(\ka_{\Delta}, \tilde{K})$ has a right adjoint, the functor
$$
\text{Ind}^{D} : W \mapsto \text{Hom}_{\mathcal{B}}(\mathcal{A}, W)_{\tilde{K}\text{--finite}}.
$$

Furthermore, $H^{D}(X)$ and $H_D(X)$ consist of full $\tilde{K}$--isotypic components of $X \otimes S$, which does not seem to be true for $H_{V}^{D}(X)$.

Modules we get by the first version of induction are tipically not irreducible. Therefore, we would like to tensor over a ``bigger" algebra $\mathcal{B}$. The algebra of $K$--invariants, $\mathcal{A}^{K}$, acts on each $\tilde{K}$--isotypic component of $X \otimes S$. Furthermore, from Theorem \cite[Theorem~4.10.]{PR} follows that the action of the algebra $\mathcal{A}^{K}$ on any  nontrivial $\tilde{K}$--isotypic component of $X \otimes S$ determines the irreducible $(\mathcal{A}, \tilde{K})$--module $X \otimes S$ up to isomorphism. This is a version of a theorem of Harish-Chandra (see \cite{HC}, \cite{LMC}).

Now we can consider $H^{D}$ and $H_{D}$ as functors from the category of $(\mathcal{A}, \tilde{K})$--modules to the category of ($U(\ka_{\Delta}) \mathcal{A}^{K}, \tilde{K})$--modules on which $\mathcal{I}$ acts by zero. Then the functor $\widehat{\text{Ind}}_D = \mathcal{A} \otimes_{U(\ka_{\Delta}) \mathcal{A}^{K}} \cdot $ is left adjoint to the functor $H^{D}$, while the functor $\widehat{\text{Ind}}^{D} = \text{Hom}_{U(\ka_{\Delta}) \mathcal{A}^{K}}(\mathcal{A}, \cdot)_{\tilde{K}\text{--finite}}$ is right adjoint to the functor $H_D$. The problem with this approach is that the algebra $\mathcal{A}^{K}$ contains $U(\g)^{K}$ which is in general very hard to describe. The same can be expected for the algebra $\mathcal{A}^{K}$.

Fortunately, the algebra of $K$--invariants contains the algebra $C(\p)^{K}$ which is very easy to describe. It can be easily seen (see \cite{PR}) that $C(\p)^{K} \simeq \End_{\tilde{K}}(S)$ which is spanned by the projections onto $\tilde{K}$--types in $S$. Moreover, on each irreducible $C(\p)^{K}$--module exactly one of the projections acts by $1$ and all other act by $0$.

Therefore, the third approach is to consider an ``intermediate" version of induction where we tensor over the algebra $U(\ka_{\Delta})(C(\p)^{K} + \mathcal{I})$.

Let $W$ be a $U(\ka_{\Delta})$--module. Let $U(\ka_{\Delta})(C(\p)^{K} + \mathcal{I})$ act on $W$ so that $\mathcal{I}$ acts by zero, one of the projections $p_1, p_2, \cdots, p_m$ acts by $1$, and the rest of them by zero. Then we define
\begin{align*}
\widetilde{\text{Ind}}_{D}(W) & = \mathcal{A} \otimes_{U(\ka_{\Delta})(C(\p)^{K} + \mathcal{I})} W \\
\widetilde{\text{Ind}}^{D}(W) & = \text{Hom}_{U(\ka_{\Delta})(C(\p)^{K} + \mathcal{I})}(\mathcal{A}, W)_{\widetilde{K}\text{--finite}}.
\end{align*}
The functor $\widetilde{\text{Ind}}_D = \mathcal{A} \otimes_{U(\ka_{\Delta}) (C(\p)^{K} + \mathcal{I})} \cdot$ is left adjoint to the  functor $H^{D}$ considered as a functor from the category of $(\mathcal{A}, \widetilde{K})$--modules to the category of  ($U(\ka_{\Delta})(C(\p)^{K} + \mathcal{I}), \widetilde{K})$--modules on which $\mathcal{I}$ acts by zero, while the functor $\widetilde{\text{Ind}}^D = \text{Hom}_{U(\ka_{\Delta}) (C(\p)^{K} + \mathcal{I})}(\mathcal{A}, \cdot)_{\tilde{K}\text{--finite}}$ is right adjoint to the functor $H_D$. All holomorphic discrete series representations can be constructed via intermediate version of Dirac induction.

\vspace{.2in}

\section{Nonholomorphic discrete series representations of the group $SU(2,1)$}

We will denote by $G$ the Lie group
$$
SU(2,1) = \{ g \in GL(3, \mathbb{C}) \, | \, \det g = 1, \, g^{*} \Gamma g = \Gamma \},
$$
where $\Gamma = \text{diag} (1, 1, -1)$. Its Cartan involution is given by $\Theta(g) = (g^{-1})^{*}$ and the corresponding maximal compact subgroup is
$$
K =  \{ g \in SU(2,1) \, | \, \Theta(g) = g \} = \left \{ \left [ \begin{array}{cc} A &  \\  & (\det(A))^{-1} \end{array} \right ] \, \bigg| \, A \in U(2) \right \} \cong U(2).
$$
The Lie algebra $\mathfrak{g_0}$ of the group $SU(2,1)$ is given by
$$
\mathfrak{su}(2,1) = \{ x \in \mathfrak{gl}(3, \mathbb{C}) \, \big| \, \text{tr}(X) = 0, x^{*} \Gamma + \Gamma x = 0 \}.
$$
Its complexification is $\mathfrak{g} \cong sl(3, \mathbb{C})$. The  complexified Lie algebra $\mathfrak{k}$ of the Lie algebra of $K$ is isomorphic to the Lie algebra $\mathfrak{gl}(2, \mathbb{C})$. Let $\g = \ka \oplus \p$ be the Cartan decomposition of the Lie algebra $\mathfrak{g}$ corresponding to the Cartan involution $\theta(x) = - x^{*}$. Then
$$
\mathfrak{k} = \text{span}_{\mathbb{C}} \{ H_1, H_2, E, F \}, \quad \mathfrak{p} = \text{span}_{\mathbb{C}} \{ E_1, E_2, F_1, F_2 \},
$$
where $H_1 = e_{11} - e_{33}$, $H_2 = e_{22} - e_{33}$, $E = e_{12}, F = e_{21}, E_{i} = e_{i3}, F_i = e_{3i}$ for $i = 1, 2$. Here $e_{ij}$
denotes the matrix in $\mathfrak{g}$  with $ij$ entry equal to one and all other entries equal to $0$.

The Cartan subalgebra $\mathfrak{h}_{0}$ of $\mathfrak{g}_{0}$ (and $\mathfrak{k}_{0}$) consists of diagonal matrices in $\mathfrak{g}_{0}$.

We identify $\mathfrak{h}^{*}$ with the set
$$
\mathbb{C}_{0}^{3} = \{ s \in \mathbb{C}^{3} \, \big| \, s_1 + s_2 + s_3 = 0 \},
$$
where $s \in \mathbb{C}_{0}^{3}$ corresponds to the functional on $\h$ given by $H_j \mapsto s_j - s_3, j=1,2$. The root system of $(\mathfrak{g}, \mathfrak{h})$ is
$$
R = \{ \epsilon_j - \epsilon_k \, \big| \, 1 \leq j,k \leq 3, j \neq k \},
$$
where $\epsilon_i$ denotes the functional $h \mapsto h_i$ on $\h$. Furthermore, the sets of compact and noncompact roots are given by
$$
R_{K} = \{ \epsilon_1 - \epsilon_2, \epsilon_2 - \epsilon_1 \}, \quad R_{P} = \{ \epsilon_1 - \epsilon_3, \epsilon_2 - \epsilon_3, \epsilon_3 - \epsilon_1, \epsilon_3 - \epsilon_2 \}.
$$

The Weyl group is the group of permutations of coordinates, i.e. $W(R) = S_3$. We choose the $R_K$-- Weyl  chamber to be
$C = \{ s \in \mathbb{R}_{0}^{3} \, | \, s_1 > s_2 \}$.
The  corresponding set of positive compact roots is given by $R_{K}^{C} = \{ \epsilon_1 - \epsilon_2 \}$. There are three $R$-- Weyl chambers contained in the $R_K$--Weyl
chamber $C$, and they are:
\begin{align*}
D_0 & = \{ s \in \mathbb{R}_{0}^{3} \, | \, s_3 > s_1 > s_2 \} \\
D_1 & = \{ s \in \mathbb{R}_{0}^{3} \, | \, s_1 > s_3 > s_2 \} \\
D_2 & = \{ s \in \mathbb{R}_{0}^{3} \, | \, s_1 > s_2 > s_3 \}.
\end{align*}
The corresponding noncompact positive roots are given by
\begin{align*}
R_{P}^{D_0} & = R^{D_0} \setminus R_{K}^{C} = \{ \epsilon_3 - \epsilon_1, \epsilon_3 - \epsilon_2 \} \\
R_{P}^{D_1} & = R^{D_1} \setminus R_{K}^{C} = \{ \epsilon_1 - \epsilon_3, \epsilon_3 - \epsilon_2 \} \\
R_{P}^{D_2} & = R^{D_2} \setminus R_{K}^{C} = \{ \epsilon_1 - \epsilon_3, \epsilon_2 - \epsilon_3 \}.
\end{align*}

Let $\hat{K}$ denote the set of equivalence classes of irreducible representations of $K$. In \cite{K} $\hat{K}$ is identified with
$$
\{ (q_1, q_2, -q_1 - q_2) \, | \, q_1, q_2 \in \frac{1}{3} \mathbb{Z}, q_1 - q_2 \in \mathbb{Z}_{+}\}
$$

For each $p \in \hat{K}$ there is a unique irreducible unitary representation of $G$ which is not a principal series representation with minimal $K$--type $p$. We will denote that representation by $\pi_p$. It is also proved that the representation $\pi_p$ belongs to a discrete series representation if and only if  $p + \rho_K - \rho_{P}^{D_j} \in D_j$, for some $j \in \{ 0, 1, 2 \}$. Here $\rho_K$ is the half sum of compact positive roots and $\rho_{P}^{D_j}$  is the half sum of noncompact positive roots $R_{P}^{D_j}$, $j \in \{ 0, 1, 2 \}$. If  $p + \rho_K - \rho_{P}^{D_2} \in D_2$, then $\pi_p$ is a lowest weight module i.e., belongs to the holomorphic discrete series.  If  $p + \rho_K - \rho_{P}^{D_0} \in D_0$, then $\pi_p$ is a highest weight module i.e., belongs to the anti-holomorphic discrete series. In the case when $p + \rho_K - \rho_{P}^{D_1} \in D_1$, $\pi_p$ is neither a highest nor a lowest weight module and its $K$--spectrum is given by $$\Gamma(\pi_p) = \{ q \in \hat{K} \, | \, q_1 \in \{ p_1, p_1 + 1, p_1 + 2, \cdots\}, q_2 \in \{ p_2, p_2 - 1, p_2 - 2, \cdots \}\}.$$

Let us denote $h_1 = \frac{2 H_1 - H_2}{3}$ and $h_2 = \frac{2 H_2 - H_1}{3}$. For $(q_1, q_2) \in (\frac{1}{3} \mathbb{Z})^{2}$ and $q_1 - q_2 \in \mathbb{Z}_{+}$, the irreducible representation of the group $K$ with the highest weight $(q_1, q_2)$ is the representation with the basis $\{ v_{q_1, q_2}^{q_1 - q_2 - 2s} \, | \, s \in \{0, 1, \cdots, q_1 - q_2 \} \}$ and the following action of $\mathfrak{k}$:
\begin{align}\label{kaction}
h_1 v_{q_1, q_2}^{q_1 - q_2 - 2s} & = (q_1 - s)v_{q_1, q_2}^{q_1 - q_2 - 2s}, \text{for  } s \in \{ 0, 1, \cdots, q_1 - q_2 \} \\
h_2 v_{q_1, q_2}^{q_1 - q_2 - 2s} & = (q_2 + s) v_{q_1, q_2}^{q_1 - q_2 - 2s}, \text{for } s \in \{ 0, 1, \cdots, q_1 - q_2 \} \notag \\
E v_{q_1, q_2}^{q_1 - q_2 - 2s} & = s \cdot (q_1 - q_2 - s + 1)v_{q_1, q_2}^{q_1 - q_2 - 2(s-1)}, \text{for  } s \in \{1, \cdots, q_1 - q_2 \} \notag \\
F v_{q_1, q_2}^{q_1 - q_2 - 2s} & = v_{q_1, q_2}^{q_1 - q_2 - 2(s+1)} \text{for } s \in \{ 0, 1, \cdots, q_1 - q_2 - 1 \} \notag \\
E v_{q_1, q_2}^{q_1 - q_2} & = 0 \notag \\
F v_{q_1, q_2}^{-q_1 + q_2} & = 0. \notag
\end{align}
Furthermore, if $\pi_p$ is a nonholomorphic discrete series representation, then it follows from \cite{K}  that there exists a basis
$$ \{ v_{q_1, q_2}^{q_1 - q_2 - 2s} \, | \, (q_1, q_2) \in \Gamma(\pi_p), s \in \{0, 1, \cdots, q_1 - q_2 \} \}$$ such that
\begin{align}\label{paction}
E_1 v_{q_1, q_2}^{q_1 - q_2} &= A_{1}^{\pi_p}(q) v_{q_1 + 1, q_2}^{(q_1 + 1) - q_2} \\
F_2 v_{q_1, q_2}^{q_1 - q_2} &= B_{2}^{\pi_p}(q) v_{q_1, q_2 - 1}^{q_1 - (q_2 - 1)} \notag,
\end{align}
where $A_{1}^{\pi_p}(q) \neq 0$ and $B_{2}^{\pi_p}(q) \neq 0$ for each $q = (q_1, q_2) \in \Gamma(\pi_p)$.
From this, it easily follows
\begin{theorem}\label{basis1}
If $\pi_p$ is nonholomorphic discrete series representation with representation space $X$, and if
$$\{ v_{q_1, q_2}^{q_1 - q_2 - 2s} \, | \, (q_1, q_2) \in \Gamma(\pi_p), s \in \{ 0,1, \cdots, q_1 - q_2 \} \}$$ is a basis for $X$ as in (\ref{paction}), then
$$
\{ F^{t} E_{1}^{n} F_2^{m} v_{p_1, p_2}^{p_1 - p_2} \, | \, n, m \in \mathbb{N}_0, t \in \{ 0, 1, \cdots, (p_1 + n) - (p_2 - m)\}\}.
$$
is a basis for $X$.
\end{theorem}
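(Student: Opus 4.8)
The plan is to use that $X$ is multiplicity-free as a $K$-module: by (\ref{kaction}), for each fixed $(q_1,q_2)\in\Gamma(\pi_p)$ the vectors $v_{q_1,q_2}^{q_1-q_2-2s}$ with $s\in\{0,\dots,q_1-q_2\}$ span the irreducible $K$-type $V_{(q_1,q_2)}$ of highest weight $(q_1,q_2)$, so $X=\bigoplus_{q\in\Gamma(\pi_p)}V_q$ with each summand occurring once. It therefore suffices to show that for every $q=(p_1+n,p_2-m)\in\Gamma(\pi_p)$ the vectors $F^{t}E_1^{n}F_2^{m}v_{p_1,p_2}^{p_1-p_2}$, $t\in\{0,\dots,(p_1+n)-(p_2-m)\}$, form a basis of $V_q$; since $(n,m,t)\mapsto(p_1+n,p_2-m,t)$ is a bijection from the index set in the statement onto the indexing set of the given basis of $X$, the union over $q$ is then automatically the claimed set.

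First I would iterate (\ref{paction}). Because $\Gamma(\pi_p)$ is stable under $q_2\mapsto q_2-1$, repeated application of the second line of (\ref{paction}) shows that $F_2^{m}v_{p_1,p_2}^{p_1-p_2}$ is a nonzero scalar multiple of $v_{p_1,p_2-m}^{p_1-(p_2-m)}$, which is again a highest weight vector; and because $\Gamma(\pi_p)$ is stable under $q_1\mapsto q_1+1$, repeated application of the first line of (\ref{paction}) then shows that $E_1^{n}F_2^{m}v_{p_1,p_2}^{p_1-p_2}$ is a nonzero multiple of the highest weight vector $v_{p_1+n,p_2-m}^{(p_1+n)-(p_2-m)}$ of $V_q$. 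Finally, from the lines $Fv_{q_1,q_2}^{q_1-q_2-2s}=v_{q_1,q_2}^{q_1-q_2-2(s+1)}$ and $Fv_{q_1,q_2}^{-q_1+q_2}=0$ of (\ref{kaction}), applying $F^{t}$ carries $v_{q_1,q_2}^{q_1-q_2}$ to $v_{q_1,q_2}^{q_1-q_2-2t}$ for $0\le t\le q_1-q_2$. Hence $F^{t}E_1^{n}F_2^{m}v_{p_1,p_2}^{p_1-p_2}$ is a nonzero multiple of $v_{p_1+n,p_2-m}^{(p_1+n)-(p_2-m)-2t}$, and as $t$ ranges over $\{0,\dots,(p_1+n)-(p_2-m)\}$ these are, up to nonzero constants, exactly the basis vectors of $V_q$ provided by (\ref{kaction}). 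This proves the theorem.

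There is no real obstacle here; the whole argument is iterating the two displayed formulas together with the stability of $\Gamma(\pi_p)$ under the shifts $q_1\mapsto q_1+1$ and $q_2\mapsto q_2-1$. The only thing to watch is the bookkeeping: one must check that the nonvanishing constants $A_1^{\pi_p}(\cdot)$ and $B_2^{\pi_p}(\cdot)$ are only ever evaluated at points of $\Gamma(\pi_p)$, so that (\ref{paction}) genuinely applies at each step, and that the ranges of $n$, $m$ and $t$ line up with the weight-label ranges appearing in (\ref{kaction}).
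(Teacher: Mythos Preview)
Your proposal is correct and is exactly the argument the paper has in mind: the paper does not spell out a proof but simply says the theorem ``easily follows'' from (\ref{kaction}) and (\ref{paction}), and your iteration of those formulas is precisely the intended justification. Your bookkeeping remarks about the nonvanishing of $A_1^{\pi_p}$ and $B_2^{\pi_p}$ along the relevant points of $\Gamma(\pi_p)$ and about the matching of index ranges are the only points requiring care, and you have addressed them.
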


\section{Generating system for the induced module}
We will consider reduced version of the algebraic Dirac induction where we tensor over the algebra $\mathcal{B}$  generated by the algebra $\mathfrak{k}_{\Delta}$ and the algebra $\mathcal{A}^K$. It follows from \cite{Pr} that the algebra $\mathcal{A}^{K}$ is generated by $Z(\ka)$, $C(\p)^K$, by the Dirac operator and by the element
\begin{align}\label{k-Dirac}
D^{\ka} & = E \otimes \alpha(F) + \frac{1}{2}(h_1 - h_2) \otimes \alpha(h_1 - h_2) \\ & + \frac{3}{2}(h_1 + h_2) \otimes \alpha(h_1 + h_2) +  F \otimes \alpha(E) \notag \\
& = - \frac{1}{4}(2 E \otimes E_2 F_1 + (h_1 - h_2) \otimes (E_1 F_1 - E_2 F_2) + \notag \\
& 3(h_1 + h_2) \otimes (E_1 F_1 + E_2 F_2) + 2 F \otimes E_1 F_2) - \frac{3}{2}(h_1 + h_2) \otimes 1 \notag.
\end{align}
\begin{remark}
Since the basis $\left(E, F, h_1 - h_2, h_1 + h_2 \right)$ of $\ka$ is dual to the basis $\left(F, E, \frac{1}{2}(h_1 - h_2), \frac{3}{2}(h_1 + h_2) \right)$, the element \eqref{k-Dirac} can be thought of as a $\ka$-version of the Dirac operator. We will call it the $\ka$--Dirac.
\end{remark}
We will give one generating system for $(\mathcal{A}, \tilde{K})$--module $\text{Ind}_{D}(W) = \mathcal{A} \otimes_{\mathcal{B}} W $, where $W = H^{D}(X)$.

One can easily check that
$$
W = \text{span}_{\mathbb{C}}\{ v_{p_1, p_2}^{p_1 - p_2 - 2s}\otimes E_1 - (p_1 - p_2 - (s-1)) v_{p_1, p_2}^{p_1 - p_2 - 2(s-1)} \otimes E_2 \, \big| \, s \in \{ 1,2, \cdots, p_1 - p_2\} \}.
$$

Let us denote
$$
w_s = v_{p_1, p_2}^{p_1 - p_2 - 2s} \otimes E_1 - (p_1 - p_2 - (s-1)) v_{p_1, p_2}^{p_1 - p_2 - 2(s-1)} \otimes E_2, \, s  \in \{ 1, 2, \cdots, p_1 - p_2 \}.
$$
Then we have
\begin{align}\label{dircoho}
h_{1_{\Delta}}w_s & = (p_1 - s + \frac{1}{2}) w_s \notag \\
h_{2_{\Delta}}w_s & = (p_2 + s - \frac{1}{2}) w_s \notag \\
E_{\Delta}w_{s} & = (s-1)(p_1 - p_2 - (s-1)) w_{s - 1} \notag \\
F_{\Delta}w_s & = w_{s+1}.
\end{align}

Let
$$
Z = \text{span}_{\mathbb{C}} \{ ab \otimes w - a \otimes bw \, | \, a \in \mathcal{A}, b \in \mathcal{B}, w \in W \}.
$$

We are going to reduce the obvious generating system for $\mathcal{A} \otimes_{\mathcal{B}}W$ given by
$$
\{ a \otimes w + Z | a \in \mathcal{A}, w \in W \}.
$$
The first step is to ``remove" $U(\ka)$.

For $x \in \mathfrak{k}$, $y \in C(\mathfrak{p})$ and $w \in W$ we have
\begin{align*}
(x \otimes y) \otimes w & =\left((1 \otimes y) \cdot (x_{\Delta} - 1 \otimes \alpha(x)) \right) \otimes w = \\
& = ((1 \otimes y) \cdot x_{\Delta}) \otimes w - (1 \otimes y \cdot \alpha(x)) \otimes w \\
& = \underbrace{((1 \otimes y) \cdot x_{\Delta}) \otimes w - (1 \otimes y) \otimes x_{\Delta}w}_{\in Z} + (1 \otimes y) \otimes x_{\Delta}w \\
& - (1 \otimes y \cdot \alpha(x)) \otimes w \\
& \in \left(1 \otimes C(\mathfrak{p}) \right) \otimes W + Z.
\end{align*}

It follows:
\begin{equation}\label{elimk}
\left(U(\mathfrak{k}) \otimes C(\mathfrak{p})\right) \otimes W \subset \left( 1 \otimes C(\mathfrak{p}) \right) \otimes W + Z.
\end{equation}

Now, we are going to describe the structure of the algebra $U(\g)$.

\begin{prop}\label{ug}
We have
\begin{align*} U(\g) = \Span_{\C}\{ & x(E_1 F_1 + E_2 F_2)^{t}y \, | \, x \in \{ E_{1}^{n}F_{2}^{m}, (\text{ad}F)(E_{1}^{n}F_{2}^{m}), \cdots, \\
& (\text{ad}F)^{n+m} (E_{1}^{n} F_{2}^{m}) \, | \, n, m \in \mathbb{N}_{0}\} , t \in \mathbb{N}_{0}, y \in U(\ka) \}.
\end{align*}
\end{prop}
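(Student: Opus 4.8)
The plan is to prove the Proposition by reducing it, via the Poincar\'e--Birkhoff--Witt theorem, to a purely commutative statement in the polynomial ring $S(\p)=\C[E_1,E_2,F_1,F_2]$, and then to prove that statement using $\frsl_2$--representation theory.

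\emph{Reduction to the associated graded.} Fix the ordered basis $(E_1,E_2,F_1,F_2,H_1,H_2,E,F)$ of $\g$. By PBW, $U(\g)$ is spanned over $\C$ by the monomials $E_1^{a}E_2^{b}F_1^{c}F_2^{e}\cdot u$ with $a,b,c,e\in\bbN_{0}$ and $u\in U(\ka)$. Filtering $U(\g)$ by total degree we get $\gr U(\g)=S(\g)=S(\p)\otimes S(\ka)$. Since $\ad F$ preserves the filtration, $(\ad F)^{j}(E_1^{n}F_2^{m})$ is homogeneous of degree $n+m$ with symbol $(\ad F)^{j}(E_1^{n}F_2^{m})$ computed in $S(\p)$ by the derivation sending $E_1\mapsto E_2$, $F_2\mapsto -F_1$ and $E_2,F_1\mapsto 0$; likewise $(E_1F_1+E_2F_2)^{t}$ has degree $2t$ with symbol $C^{t}$, where $C:=E_1F_1+E_2F_2\in S(\p)$. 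A routine induction on degree then reduces the Proposition to the identity
$$
S(\p)=\Span_{\C}\{(\ad F)^{j}(E_1^{n}F_2^{m})\,C^{t}\mid n,m\in\bbN_{0},\ 0\le j\le n+m,\ t\in\bbN_{0}\}
$$
in $S(\p)$: given a PBW monomial $E_1^{a}E_2^{b}F_1^{c}F_2^{e}\cdot u$ of degree $\le d$, one expands $E_1^{a}E_2^{b}F_1^{c}F_2^{e}$ in $S(\p)$ in terms of these generators, lifts termwise to $U(\g)$, multiplies on the right by $u\in U(\ka)$, and absorbs the strictly-lower-degree error using the inductive hypothesis (noting that $\Span_{\C}\{(\ad F)^{j}(E_1^{n}F_2^{m})C^{t}\}\cdot U(\ka)$ is exactly the span appearing in the Proposition).

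\emph{The commutative statement.} Let $\frsl_2=\langle E,F,H_1-H_2\rangle\subset\ka$ act on $R:=S(\p)$ by derivations extending $\ad$, and bigrade $R$ by the degree in $E_1,E_2$ and the degree in $F_1,F_2$. Then $R_{(p,q)}=S^{p}(\p^{+})\otimes S^{q}(\p^{-})$, where $\p^{+}=\langle E_1,E_2\rangle$ and $\p^{-}=\langle F_1,F_2\rangle$ are each isomorphic to the two-dimensional irreducible module $V_1$. By the Clebsch--Gordan rule, $R_{(p,q)}\cong\bigoplus_{t=0}^{\min(p,q)}V_{p+q-2t}$ is multiplicity-free, where $V_r$ is the $(r+1)$--dimensional irreducible $\frsl_2$--module. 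Now: (i) $C$ spans the one-dimensional space of $\frgl_2$--invariants in $R_{(1,1)}=\p^{+}\otimes\p^{-}$, so multiplication by $C$ is an $\frsl_2$--equivariant map $R_{(p,q)}\to R_{(p+1,q+1)}$, and it is injective since $R$ is an integral domain and $C\ne0$; (ii) $E_1^{n}F_2^{m}$ is the product of the highest weight vectors of $S^{n}(\p^{+})$ and $S^{m}(\p^{-})$, hence a highest weight vector of weight $n+m$ in $R_{(n,m)}$ which generates the top summand $V_{n+m}\subset R_{(n,m)}$, and since $(\ad F)^{n+m}(E_1^{n}F_2^{m})$ is a nonzero multiple of $E_2^{n}F_1^{m}$, the set $\{(\ad F)^{j}(E_1^{n}F_2^{m})\mid 0\le j\le n+m\}$ is a basis of that $V_{n+m}$. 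Combining (i) and (ii): for each $t$ with $0\le t\le\min(p,q)$ the subspace $C^{t}\cdot\Span_{\C}\{(\ad F)^{j}(E_1^{p-t}F_2^{q-t})\mid 0\le j\le p+q-2t\}$ is an $\frsl_2$--submodule of $R_{(p,q)}$ isomorphic to $V_{p+q-2t}$, hence is precisely the $V_{p+q-2t}$--isotypic component of the multiplicity-free module $R_{(p,q)}$; as $t$ runs over $0,\dots,\min(p,q)$ these components exhaust $R_{(p,q)}$ (consistently with $\sum_{t=0}^{\min(p,q)}(p+q-2t+1)=(p+1)(q+1)=\dim R_{(p,q)}$), which is the displayed identity, and hence, by the first step, the Proposition.

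\emph{The main obstacle.} The representation-theoretic core — Clebsch--Gordan, multiplicity one, and a highest weight vector generating the Cartan component — is standard. The points that require care are the filtration bookkeeping in the reduction to $S(\p)$ (that $(\ad F)^{j}(E_1^{n}F_2^{m})$ and $(E_1F_1+E_2F_2)^{t}$ have the stated leading symbols, with errors of strictly lower degree that the induction can absorb) and the nonvanishing $(\ad F)^{j}(E_1^{n}F_2^{m})\ne0$ for all $0\le j\le n+m$ invoked in (ii), which one checks by a direct expansion of $(\ad F)^{j}$ applied to $E_1^{n}F_2^{m}$.
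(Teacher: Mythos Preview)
Your proof is correct. Both you and the paper reduce, via PBW and induction on filtration degree, to a decomposition of $S(\p)$ into $\ka$-types generated by the vectors $(\ad F)^{j}(E_1^{n}F_2^{m})$ together with powers of $C=E_1F_1+E_2F_2$, so the overall strategy is the same. The difference is in how that decomposition is obtained: the paper grades $S(\p)$ by total degree, invokes the identity $S^{n}(\p)=\bigl(\bigoplus_{i}V_{(n-i,-i)}\bigr)\oplus C\cdot S^{n-2}(\p)$ from \cite{Pr}, and then checks by an explicit commutator calculation that $E_1^{a+1}E_2^{b}F_1^{c+1}F_2^{d}+E_1^{a}E_2^{b+1}F_1^{c}F_2^{d+1}\equiv E_1^{a}E_2^{b}F_1^{c}F_2^{d}(E_1F_1+E_2F_2)$ modulo $U_{k}(\g)$. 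You instead pass once and for all to the associated graded, bigrade $S(\p)=\bigoplus_{p,q}S^{p}(\p^{+})\otimes S^{q}(\p^{-})$, and read off each $R_{(p,q)}$ directly from Clebsch--Gordan and multiplicity-freeness, with injectivity of multiplication by $C$ coming from $S(\p)$ being a domain. Your route is self-contained and a bit cleaner (no external lemma, no commutator bookkeeping in $U(\g)$); the paper's version has the virtue of staying inside $U(\g)$ throughout and making the inductive step completely explicit.
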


\proof
Let us introduce the following notation
\begin{align*} T = \text{span}_{\C}\{ & x(E_1 F_1 + E_2 F_2)^{t}y \, | \, x \in \{ E_{1}^{n}F_{2}^{m}, (\text{ad}F)(E_{1}^{n}F_{2}^{m}), \cdots, \\
& (\text{ad}F)^{n+m} (E_{1}^{n} F_{2}^{m}) \, | \, n, m \in \mathbb{N}_{0}\} , t \in \mathbb{N}_{0}, y \in U(\ka) \}
\end{align*}
We will show by induction that $U_{l}(\g) \subset T$ for all $l \in \mathbb{N}_{0}$. The claim is obvious for $l = 0$ and $l = 1$. Let us assume that the claim is true for $l \in \mathbb{N}_{0}$ less than or equal to some fixed $k \geq 1$. By Poincare-Birkhoff-Witt's theorem, it is enough to prove that elements of the form $E_{1}^{p} E_{2}^{q} F_{1}^{r} F_{2}^{s}$, $p,q,r,s \in \mathbb{N}_{0}, p+q+r+s = k+1$ are in $T$. From \cite[Lemma~2.3.]{Pr} it follows that
$$
S^{n}(\mathfrak{p}) = \left( \sum_{i = 0}^{n} \oplus V_{(n - i, -i)} \right) \oplus (E_1 F_1 + E_2 F_2) S^{n-2}(\mathfrak{p}),
$$
for all $n \in \mathbb{N}, n \geq 2$, where $V_{(n-i, -i)}$ is the irreducible $\mathfrak{k}$--module with the highest weight vector $E_{1}^{n-i} F_{2}^{i}$ and the highest weight $(n - i, -i)$. Now we have
\begin{align*}
&E_{1}^{p} E_{2}^{q} F_{1}^{r} F_{2}^{s}  \in \text{span}_{\C}\{ E_{1}^{k+1-i}F_{2}^{i}, (\text{ad}F)(E_{1}^{k+1-i}F_{2}^{i}), \cdots, (\text{ad}F)^{k+1} (E_{1}^{k+1-i} F_{2}^{i}) \, | \\
& \, 0 \leq i \leq k+1 \} \oplus  \text{span}_{\C} \{ E_{1}^{a+1} E_{2}^{b} F_{1}^{c+1} F_{2}^{d} + E_{1}^{a} E_{2}^{b+1} F_{1}^{c} F_{2}^{d+1} \, | \, a, b, c, d \in \mathbb{N}_{0}, \\
& a + b + c + d = k-1 \}.
\end{align*}
Therefore, it is enough to show that
$
E_{1}^{a+1} E_{2}^{b} F_{1}^{c+1} F_{2}^{d} + E_{1}^{a} E_{2}^{b+1} F_{1}^{c} F_{2}^{d+1} \in T$ for $a, b, c, d \in \mathbb{N}_{0}$, $a + b + c + d = k-1$.
By induction on $c$ and $d$ it can be easily seen that
\begin{align*}
E_{1}^{a+1} E_{2}^{b} F_{1}^{c+1} F_{2}^{d} + E_{1}^{a} E_{2}^{b+1} F_{1}^{c} F_{2}^{d+1}  & \in E_{1}^{a} E_{2}^{b} F_{1}^{c} F_{2}^{d}(E_1 F_1 + E_2 F_2) + U_{k}(\g) \\
& \subset U_{k-1}(\g)(E_1 F_1 + E_2 F_2) + U_{k}(\g).
\end{align*}
Since $E_1 F_1 + E_2 F_2$ commutes with $\ka$, the proposition follows.
\qed

\bigskip

Let us denote  $C = E_1 \otimes F_1 + E_2 \otimes F_2$ and $C^{-} = F_1 \otimes E_1 + F_2 \otimes E_2$.
\begin{lemma}\label{halfdirac}
The elements $C$ and $C^{-}$ are contained in the ideal $\mathcal{I}$.
\end{lemma}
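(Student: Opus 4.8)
The plan is to recognise the Dirac operator itself, in the coordinates of Section~3, as the sum $C+C^{-}$, and then to split $C$ off from $D$ using the projections that span $C(\p)^{K}$. First I would record that the invariant form used here is the trace form (this is what the duality asserted in the Remark after \eqref{k-Dirac} amounts to), so $B(E_{i},F_{j})=\delta_{ij}$ while $B(E_{i},E_{j})=B(F_{i},F_{j})=0$; hence the dual basis of $(E_{1},E_{2},F_{1},F_{2})$ is $(F_{1},F_{2},E_{1},E_{2})$ and
\[
D=\sum_{i}E_{i}\otimes F_{i}+\sum_{i}F_{i}\otimes E_{i}=C+C^{-}.
\]
In particular $C+C^{-}=D\in\mathcal I$, so it is enough to prove $C\in\mathcal I$; then $C^{-}=D-C\in\mathcal I$ as well. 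I would also note that $C,C^{-}\in\mathcal A^{K}$: with $\p^{+}=\Span_{\C}\{E_{1},E_{2}\}$ and $\p^{-}=\Span_{\C}\{F_{1},F_{2}\}$ the two $K$--stable Lagrangian subspaces of $(\p,B)$, the elements $C\in\p^{+}\otimes\p^{-}$ and $C^{-}\in\p^{-}\otimes\p^{+}$ are the identity endomorphisms $\id_{\p^{+}}$, $\id_{\p^{-}}$ under the isomorphism $\p^{\mp}\cong(\p^{\pm})^{*}$ furnished by $B$, hence $K$--invariant.

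The main step is a grading argument in $C(\p)$. I would realise the spin module as $S=\bigwedge^{\bullet}\p^{+}=S_{0}\oplus S_{1}\oplus S_{2}$, $S_{j}=\bigwedge^{j}\p^{+}$, with the Clifford action of $\p^{+}$ by exterior multiplication and of $\p^{-}$ by contraction through $B$. Since $S_{0},S_{1},S_{2}$ are pairwise inequivalent $\widetilde K$--modules (of dimensions $1,2,1$, with distinct central characters), the description of $C(\p)^{K}$ recalled in Section~2 shows that the projections $p_{0},p_{1},p_{2}$ onto these summands form a basis of $C(\p)^{K}\subseteq\mathcal A^{K}$, with $p_{0}+p_{1}+p_{2}=1$. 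The Clifford factors $F_{i}$ appearing in $C$ lower the $\bigwedge^{\bullet}\p^{+}$--degree by one and those $E_{i}$ appearing in $C^{-}$ raise it by one, and $C(\p)$ acts faithfully on $S$; this gives the exact identities in $\mathcal A$
\[
C\,p_{0}=0,\qquad p_{j}\,C\,p_{k}=0\ (j\neq k-1),\qquad p_{j}\,C^{-}\,p_{k}=0\ (j\neq k+1),
\]
hence $p_{0}\,D\,p_{1}=C\,p_{1}$ and $p_{1}\,D\,p_{2}=C\,p_{2}$, so that
\[
C=C\,p_{0}+C\,p_{1}+C\,p_{2}=p_{0}\,D\,p_{1}+p_{1}\,D\,p_{2}.
\]
As $p_{0},p_{1},p_{2}\in\mathcal A^{K}$, $D$ generates $\mathcal I$, and $\mathcal I$ is a two--sided ideal of $\mathcal A^{K}$, both summands lie in $\mathcal I$; thus $C\in\mathcal I$ and then $C^{-}=D-C\in\mathcal I$.

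I expect the only delicate point to be that middle step: one must fix the realisation of the Clifford module precisely enough that the degree-shift relations $p_{j}Cp_{k}=0$ and $p_{j}C^{-}p_{k}=0$ hold on the nose in $\mathcal A$, after which the conclusion is purely formal. Everything else — the identity $D=C+C^{-}$, the $K$--stability and isotropy of $\p^{\pm}$, and the degree bookkeeping — is a short direct check.
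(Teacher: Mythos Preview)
Your argument is correct and rests on the same idea as the paper's: write $D=C+C^{-}$ and then isolate $C$ by sandwiching $D$ with elements of $C(\p)^{K}\subseteq\mathcal A^{K}$. The only difference is cosmetic: the paper uses the single projection $q=-\tfrac{1}{2}\otimes(E_{1}F_{1}+E_{2}F_{2})$ onto the two--dimensional piece of $S$ and checks directly that $Dq-qD=C-C^{-}$, whence $C=\tfrac{1}{2}(D+[D,q])\in\mathcal I$, whereas you use the full $\bbZ$--grading on $S=\bigwedge\p^{+}$ and all three projections to write $C=p_{0}Dp_{1}+p_{1}Dp_{2}$. Your version is a bit more conceptual and makes transparent why such a splitting must exist (the Clifford factors of $C$ and $C^{-}$ shift degree in opposite directions), while the paper's version is a one--line computation; both are equally short once set up.
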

\begin{proof}
We have $D = C + C^{-}$. Furthermore, it can easily be seen that
$$
D (-\frac{1}{2} \otimes (E_1 F_1 + E_2 F_2)) - (-\frac{1}{2} \otimes (E_1 F_1 + E_2 F_2)) D = C - C^{-}.
$$
Since $-\frac{1}{2} \otimes (E_1 F_1 + E_2 F_2) \in \mathcal{A}^{K}$, we have
\begin{align*}
C & = \frac{1}{2}(D + D (-\frac{1}{2} \otimes (E_1 F_1 + E_2 F_2)) - (-\frac{1}{2} \otimes (E_1 F_1 + E_2 F_2)) D) \in \mathcal{I}\\
C^{-} & = D - C \in \mathcal{I}.
\end{align*}
\end{proof}
Since the ideal $I$ acts on $W$ by zero, we get $CC^{-} \otimes W \subset Z$ and $C^{-}C \otimes W \subset Z$. Furthermore, we have
\begin{align}\label{connection}
(E_1 F_1 + E_2 F_2) \otimes 1 & = -\frac{1}{4} (2 CC^{-} + 2 C^{-}C  + 3(h_1 + h_2) \otimes (E_1 F_1 + E_2 F_2) \\
& + 2 E \otimes E_2 F_1 + (h_1 - h_2) \otimes (E_1 F_1 - E_2 F_2) + 2 F \otimes E_1 F_2). \notag
\end{align}
from where it follows that
$$
((E_1 F_1 + E_2 F_2) \otimes 1) \otimes w \in \left( U(\mathfrak{k}) \otimes C(\mathfrak{p}) \right) \otimes W + Z.
$$
By \eqref{elimk}, proposition \ref{ug} and \eqref{connection} we see that the vector space $\mathcal{A} \otimes_{\mathcal{B}} W$ is spanned by the elements of the form

\begin{equation}\label{ug_reduced}
(x \otimes y) \otimes w, \quad x \in V_{(n, -m)}, \, y \in C(\mathfrak{p}), \, w \in W, \, n,m \in \mathbb{N}_{0}.
\end{equation}

\begin{remark}
From \eqref{connection}, \eqref{k-Dirac}, and from the identity
\begin{align*}
(h_1 + h_2) \otimes 1 & = (h_1 + h_2)_{\Delta} - 1 \otimes \alpha(h_1 + h_2) \\
& = (h_1 + h_2)_{\Delta} - 1 \otimes (-\frac{1}{2}(E_1 F_1 + E_2 F_2) - 1) \in \mathcal{B},
\end{align*}
it follows that $(E_1 F_1 + E_2 F_2) \otimes 1 \in \mathcal{B}$.
\end{remark}

The next step is to reduce a part of the algebra $C(\p)$.

\begin{lemma}\label{E1E2}
If $T \in \End_{\mathbb{C}}(S) \cong C(\mathfrak{p})$ is any linear operator such that
$$
T(E_1) = T(E_2) = 0,
$$
then $T = T p^{'}$, where $p^{'} = p_1 + p_3$ is the sum of the projections on $\Span_{\mathbb{C}} \{ 1 \}$, respectively $\Span_{\mathbb{C}} \{ E_1 \wedge E_2 \}$.
\end{lemma}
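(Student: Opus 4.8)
The plan is to work in the standard model of the spin module, $S = \twedge \frp^{+}$ with $\frp^{+} = \Span_{\C}\{E_1, E_2\}$, so that $S$ has basis $\{\,1,\ E_1,\ E_2,\ E_1 \wedge E_2\,\}$ and $C(\frp)\cong\End_{\C}(S)$ via this identification. Under this description the three $\widetilde K$--isotypic components of $S$ are $\C\cdot 1$, $\Span_{\C}\{E_1,E_2\}$ and $\C\cdot E_1\wedge E_2$, and $p_1, p_2, p_3$ are the corresponding projections (so $p_1$ and $p_3$ are the ones named in the statement). Since $p_1$ and $p_3$ are projections onto pairwise non-isomorphic $\widetilde K$--types we have $p_1 p_3 = p_3 p_1 = 0$, hence $p' = p_1 + p_3$ is idempotent; concretely it is the projection of $S$ onto $\Span_{\C}\{1,\ E_1\wedge E_2\}$ along $\Span_{\C}\{E_1,E_2\}$. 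Equivalently, $1 - p' = p_2$ is the projection onto $\Span_{\C}\{E_1,E_2\}$.

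With this in hand the identity $T = Tp'$ is checked directly. The point is that $T = Tp'$ holds if and only if $T$ annihilates $\ker p' = \operatorname{image}(1-p') = \Span_{\C}\{E_1, E_2\}$, and the hypothesis $T(E_1) = T(E_2) = 0$ says exactly that. Spelled out on a general vector $v = \alpha\cdot 1 + \beta E_1 + \gamma E_2 + \delta\, E_1\wedge E_2 \in S$: one has $p'(v) = \alpha\cdot 1 + \delta\, E_1\wedge E_2$, so $Tp'(v) = \alpha\, T(1) + \delta\, T(E_1\wedge E_2)$, while $T(v) = \alpha\, T(1) + \beta\, T(E_1) + \gamma\, T(E_2) + \delta\, T(E_1\wedge E_2)$, and the two middle terms vanish by assumption. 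Hence $T(v) = Tp'(v)$ for every $v\in S$, i.e. $T = Tp'$.

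There is no real obstacle here; the only things to be careful about are purely bookkeeping. First, the composition order: we \emph{pre}compose with $p'$, so $Tp'$ first throws away the summand $\Span_{\C}\{E_1,E_2\}$ on which $T$ is assumed to vanish and then applies $T$ — this is why the hypothesis on $E_1, E_2$ (rather than on $T$ restricted to the image) is what is needed. Second, one must correctly match the abstractly defined projections $p_1$ and $p_3$ to the concrete lines $\C\cdot 1$ and $\C\cdot E_1\wedge E_2$ inside $S$, which is immediate once the model $S = \twedge\frp^{+}$ is fixed.
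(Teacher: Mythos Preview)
Your proof is correct and follows essentially the same approach as the paper: both verify $T = Tp'$ by checking it on the basis $\{1, E_1, E_2, E_1\wedge E_2\}$ of $S$, using that $p'$ is the identity on $\Span_{\C}\{1, E_1\wedge E_2\}$ and zero on $\Span_{\C}\{E_1, E_2\}$, where $T$ vanishes by hypothesis. Your write-up is simply more expansive about the setup and the bookkeeping.
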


\proof
For $s \in \text{span}_{\mathbb{C}} \{ 1, E_1 \wedge E_2 \}$ we have
$p'(s) = s$ and then $Tp'(s) = T(s)$. Furthermore, $T(E_1) = T(E_2) = 0 = T p'(E_1) = T p'(E_2)$. Since $S =  \text{span}_{\mathbb{C}}\{1, E_1, E_2, E_1 \wedge E_2 \}$ it follows that $T = T p'$.
\qed

\bigskip

In the spin module $S$, the following identities hold
\begin{align*}
E_1 E_2 \cdot E_1 & = E_1 \cdot (-E_1 \wedge E_2) = 0, \quad & F_1 F_2 \cdot E_1 & = F_1 \cdot 0 = 0, \\
E_1 E_2 \cdot E_2 & = E_1 \cdot 0 = 0,  \quad & F_1 F_2 \cdot E_2 & = F_1 \cdot (-2) = 0.
\end{align*}
 By lemma \ref{E1E2}, we have $E_1 E_2 p' = E_1 E_2$ and $F_1 F_2 p' = F_1 F_2$. Since $1 \otimes p_1$ and $1 \otimes p_3$ act on $W$ by zero, we have $(1 \otimes p')w = 0$ for $w \in W$. For $a \in \mathcal{A}$ and $s \in \text{span}_{\mathbb{C}}\{ E_1 E_2, F_1 F_2\}$ we get
\begin{align*}
\left( a(1 \otimes s) \right) \otimes w & = (a(1 \otimes sp')) \otimes w  = (a(1 \otimes s)(1 \otimes p')) \otimes w \\
& = (a (1 \otimes s)(1 \otimes p')) \otimes w - (a(1 \otimes s)) \otimes \underbrace{(1 \otimes p')w}_{0} \in Z.
\end{align*}
From here it follows that
\begin{equation}\label{pp}
(U(\mathfrak{g}) \otimes C(\mathfrak{p})\p^{-}\p^{-}) \otimes W \subset Z \text{ and } (U(\mathfrak{g}) \otimes C(\mathfrak{p})\p^{+}\p^{+}) \otimes W \subset Z.
\end{equation}

Furthermore, since the projection on the two-dimensional component in the spin module $S$ is equal to $p_2 = -\frac{1}{2}(E_1 F_1 + E_2 F_2)$ and since $1 \otimes p_2$ acts on $W$ by one, then the element $1 \otimes (E_1 F_1 + E_2 F_2) \in \mathcal{B}$ acts on $W$ by $-2$.

Therefore, for $a \in U(\g)$ and $w \in W$
we have
\begin{align}\label{elim_one}
& (a \otimes 1) \otimes w  = -\frac{1}{2}(a \otimes 1) \otimes (-2w) = - \frac{1}{2}(a \otimes 1) \otimes ((1 \otimes (E_1 F_1 + E_2 F_2)) \cdot w) \\
& = \frac{1}{2}	\underbrace{\left( (a \otimes 1)(1 \otimes (E_1 F_1 + E_2 F_2)) \otimes w - (a \otimes 1) \otimes ((1 \otimes (E_1 F_1 + E_2 F_2)) \cdot w \right)}_{\in Z}  \notag \\
& -\frac{1}{2}(a \otimes (E_1F_1 + E_2 F_2)) \otimes w \notag \\
& \in -\frac{1}{2}(a \otimes E_1 F_1) \otimes w - \frac{1}{2}(a \otimes E_2 F_2) \otimes w + Z. \notag
\end{align}

From \eqref{ug_reduced}, \eqref{pp} and \eqref{elim_one} it follows that the vector space $\mathcal{A} \otimes_{\mathcal{B}} W$ is spanned by the set
\begin{align}\label{cp_reduced}
\{ (x \otimes y) \otimes w \, | \,  & x \in \{ E_{1}^{n} F_{2}^{m}, (\text{ad}F)(E_{1}^{n} F_{2}^{m}), \cdots, (\text{ad}F)^{n+m}(E_{1}^{n} F_{2}^{m}) \, | \, n,m \in \mathbb{N}_0 \}, \\
& y \in \{ E_1, E_2, F_1, F_2, E_1 F_1, E_1 F_2, E_2 F_1, E_2 F_2 \}, w \in W \}. \notag
\end{align}

\bigskip

Furthermore, one can easily check that
$$
(F_1 \otimes E_1 + F_2 \otimes E_2)(1 \otimes F_1 F_2) - (1 \otimes F_1 F_2)(F_1 \otimes E_1 + F_2 \otimes E_2) = - 2(F_1 \otimes F_2 - F_2 \otimes F_1)
$$
and
$$
(E_1 \otimes F_1 + E_2 \otimes F_2)(1 \otimes E_1 E_2) - (1 \otimes E_1 E_2)(E_1 \otimes F_1 + E_2 \otimes F_2) = - 2(E_1 \otimes E_2 - E_2 \otimes E_1).
$$
From here, \eqref{pp} and $C \otimes W \subset Z$, $C^{-} \otimes W \subset Z$ we get
\begin{equation}\label{anticom}
(E_1 \otimes E_2 - E_2 \otimes E_1) \otimes w \in Z \text{ and } (F_1 \otimes F_2 - F_2 \otimes F_1) \otimes w \in Z.
\end{equation}

A straightforward calculation shows that the element $(E_1 F_1 + E_2 F_2) \otimes 1$ acts on $W$ by $(p_1 + 2 p_2 - 1)$.

\begin{prop}\label{F}
For $s \in \{1,2, \cdots, p_1 - p_2 \}$ we have
$$(1 \otimes F_1) \otimes (p_2 - p_1 + s)w_s \in (1 \otimes F_2) \otimes F_{\Delta}w_s + Z.$$
\end{prop}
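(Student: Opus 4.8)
The plan is to reduce the right-hand side $(1\otimes F_2)\otimes F_{\Delta}w_s$ modulo $Z$ until only a multiple of $(1\otimes F_1)\otimes w_s$ survives, and then to check that the multiple equals $p_2-p_1+s$. Since $F_{\Delta}\in\ka_{\Delta}\subset\mathcal{B}$ we have $(1\otimes F_2)\otimes F_{\Delta}w_s\equiv\bigl((1\otimes F_2)F_{\Delta}\bigr)\otimes w_s\pmod Z$, and $(1\otimes F_2)F_{\Delta}=F\otimes F_2+1\otimes F_2\alpha(F)$. Reading $\alpha(F)=-\tfrac12 E_2F_1$ off the two formulas for $D^{\ka}$ in \eqref{k-Dirac} and using $[\alpha(X),v]=[X,v]$ in $C(\p)$ for $X\in\ka$, $v\in\p$, a short Clifford computation gives $F_2\alpha(F)=F_1-\tfrac12 E_2F_1F_2$; the term $E_2F_1F_2$ lies in $C(\p)\p^{-}\p^{-}$ and contributes nothing modulo $Z$ by \eqref{pp}, so the claim is reduced to
\[
(F\otimes F_2)\otimes w_s\equiv (p_2-p_1+s-1)\,(1\otimes F_1)\otimes w_s\pmod Z .
\]

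For this I would use the $\ka$--Dirac operator. Because $W$ is an irreducible $\widetilde{K}$--module (which one reads off \eqref{dircoho}) and $D^{\ka}\in\mathcal{A}^{K}\subset\mathcal{B}$ preserves $\widetilde{K}$--types, $D^{\ka}$ acts on $W$ by a scalar $\mu$; rewriting $D^{\ka}=(E_1F_1+E_2F_2)\otimes 1+\tfrac12(CC^{-}+C^{-}C)-\tfrac32(h_1+h_2)\otimes 1$ by \eqref{k-Dirac} and \eqref{connection} and using Lemma \ref{halfdirac} together with the scalars by which $(E_1F_1+E_2F_2)\otimes 1$ and $(h_1+h_2)\otimes 1$ act on $W$ (from \eqref{dircoho} and the remark following \eqref{connection}) gives $\mu=-\tfrac12(p_1-p_2+2)$; equivalently one just applies $D^{\ka}$ to $w_1$ using \eqref{kaction}. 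Now multiply $1\otimes F_1$ on the right by $D^{\ka}$: on the one hand $\bigl((1\otimes F_1)D^{\ka}\bigr)\otimes w_s\equiv (1\otimes F_1)\otimes D^{\ka}w_s=\mu\,(1\otimes F_1)\otimes w_s\pmod Z$; on the other hand, expanding $(1\otimes F_1)D^{\ka}$ via \eqref{k-Dirac} and computing the products $F_1\alpha(X)$ in $C(\p)$ (one finds $F_1\alpha(F)=0$, $F_1\alpha(h_1-h_2)=F_1-\tfrac12 E_2F_1F_2$, $F_1\alpha(h_1+h_2)=\tfrac12 E_2F_1F_2$ and $F_1\alpha(E)=F_2+\tfrac12 E_1F_1F_2$), every monomial carrying a factor in $C(\p)\p^{-}\p^{-}$ drops out modulo $Z$ by \eqref{pp}, leaving $\bigl((1\otimes F_1)D^{\ka}\bigr)\otimes w_s\equiv\bigl(\tfrac12(h_1-h_2)\otimes F_1+F\otimes F_2\bigr)\otimes w_s$.

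Finally, $\tfrac12(h_1-h_2)\otimes F_1\in U(\ka)\otimes C(\p)$ is treated exactly as in the proof of \eqref{elimk}: pulling $(h_1-h_2)_{\Delta}\in\ka_{\Delta}$ across, $\bigl((h_1-h_2)\otimes F_1\bigr)\otimes w_s\equiv (1\otimes F_1)\otimes(h_1-h_2)_{\Delta}w_s-\bigl(1\otimes F_1\alpha(h_1-h_2)\bigr)\otimes w_s$, and substituting $(h_1-h_2)_{\Delta}w_s=(p_1-p_2-2s+1)w_s$ from \eqref{dircoho}, $F_1\alpha(h_1-h_2)=F_1-\tfrac12 E_2F_1F_2$ and \eqref{pp} yields $\bigl(\tfrac12(h_1-h_2)\otimes F_1\bigr)\otimes w_s\equiv\tfrac12(p_1-p_2-2s)(1\otimes F_1)\otimes w_s$. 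Equating the two expressions for $\bigl((1\otimes F_1)D^{\ka}\bigr)\otimes w_s$ and solving for $(F\otimes F_2)\otimes w_s$ gives the displayed congruence (indeed $\mu-\tfrac12(p_1-p_2-2s)=p_2-p_1+s-1$), and feeding this into the first paragraph proves Proposition \ref{F}; the degenerate case $s=p_1-p_2$, where $F_{\Delta}w_s=0$ and $p_2-p_1+s=0$, is automatic.

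The conceptual step, and hence the main obstacle, is recognising that all the purely formal manipulations available (pulling $\ka_{\Delta}$-elements across, Clifford relations, Proposition \ref{ug}, \eqref{elimk}, \eqref{pp}) are mutually consistent and cannot by themselves produce the scalar $p_2-p_1+s$: one genuinely representation--theoretic input is needed, and the cleanest one is the scalar $\mu$ by which the $\ka$--Dirac element $D^{\ka}$ acts on $W$. Once that is in hand, the remaining work is routine Clifford bookkeeping — getting the signs in the identities $F_i\alpha(X)$ right and correctly recognising which monomials lie in $C(\p)\p^{-}\p^{-}$.
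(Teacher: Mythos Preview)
Your proof is correct. The opening reduction of $(1\otimes F_2)\otimes F_{\Delta}w_s$ to $(F\otimes F_2)\otimes w_s+(1\otimes F_1)\otimes w_s$ modulo $Z$ is exactly the content of the paper's \eqref{1o} and \eqref{2o}, just written from the other side. The genuine divergence is in how you evaluate $(F\otimes F_2)\otimes w_s$. The paper rewrites it, via $C\in\mathcal{I}$, the relation \eqref{anticom}, and the commutator $[E_1,F_1]=H_1$ in $U(\frg)$, as $((E_1F_1+E_2F_2)\otimes F_1)\otimes w-(H_1\otimes F_1)\otimes w$ modulo $Z$, and then feeds in the scalar $p_1+2p_2-1$ by which $(E_1F_1+E_2F_2)\otimes 1\in\mathcal{B}$ acts on $W$ together with the eigenvalue of $H_{1_{\Delta}}$. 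You instead right-multiply $1\otimes F_1$ by $D^{\frk}\in\mathcal{A}^K\subset\mathcal{B}$ and use the scalar $\mu=-\tfrac12(p_1-p_2+2)$ by which $D^{\frk}$ acts on the irreducible $\widetilde{K}$-module $W$. The two routes are closely related, since by the remark following \eqref{connection} the element $(E_1F_1+E_2F_2)\otimes 1$ is itself a combination of $D^{\frk}$, $CC^{-}+C^{-}C$, and $(h_1+h_2)_{\Delta}$; in effect you are packaging the paper's three ingredients into the single invariant $D^{\frk}$. Your approach is perhaps conceptually cleaner in that it invokes the distinguished fifth generator of $\mathcal{A}^K$ directly, while the paper's approach has the advantage of not needing to compute the scalar $\mu$ separately. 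Either way the arithmetic lands on the same coefficient $p_2-p_1+s$, and your Clifford identities $F_1\alpha(X)$ and the recognition of the $C(\frp)\frp^{-}\frp^{-}$ terms are all correct.
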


\proof
For $w \in W$ we have
\begin{equation}\label{1o}
F_{\Delta}(1 \otimes F_2) \otimes w 
 \in (1 \otimes F_2) \otimes F_{\Delta}w - (1 \otimes F_1) \otimes w + Z.
\end{equation}
Furthermore, using \eqref{pp} we get
\begin{equation}\label{2o}
F_{\Delta}(1 \otimes F_2) \otimes w  
\in (F \otimes F_2) \otimes w + Z.
\end{equation}
From \eqref{anticom}, $C \otimes W \subset Z$ and $[E_1, F_1] = F$ we get
\begin{align}\label{3o}
(F \otimes F_2) \otimes w & \in ((E_2 F_2 + F_1 E_1) \otimes F_1) \otimes w + Z \\
& = ((E_1 F_1 + E_2 F_2) \otimes F_1) \otimes w + ((F_1 E_1 - E_1 F_1) \otimes F_1) \otimes w + Z \notag \\
& = ((E_1 F_1 + E_2 F_2) \otimes F_1) \otimes w- (H_1 \otimes F_1) \otimes w + Z. \notag
\end{align}

Since $H_1 = 2 h_1 + h_2$, $\alpha(h_1) = - \frac{1}{2} - \frac{1}{2}E_1 F_1$ and $\alpha(h_2) = - \frac{1}{2} - \frac{1}{2}E_2 F_2$, we have $\alpha(H_1) = - E_1F_1 - \frac{1}{2}E_2 F_2 - \frac{3}{2}$. Using \eqref{pp} we get
\begin{align}\label{4o}
(H_1 \otimes F_1) \otimes w & = ((1 \otimes F_1)(H_1 \otimes 1 + 1 \otimes \alpha(H_1))) \otimes w - (1 \otimes F_1\alpha(H_1)) \otimes w \\
& \in (1 \otimes F_1) \otimes H_{1_{\Delta}}w - \frac{1}{2}(1 \otimes F_1) \otimes w + Z. \notag
\end{align}
Now from \eqref{1o}, \eqref{2o}, \eqref{3o} and \eqref{4o} it follows that
\begin{align*}
(1 \otimes F_1) \otimes w & \in (1 \otimes F_2) \otimes F_{\Delta} w - F_{\Delta}(1 \otimes F_2) \otimes w + Z \\
& = (1 \otimes F_2) \otimes F_{\Delta}w - ((E_1 F_1 + E_2 F_2) \otimes F_1) \otimes w \\
& + (1 \otimes F_1) \otimes H_{1_{\Delta}}w - \frac{1}{2}(1 \otimes F_1) \otimes w + Z.
\end{align*}
From here we get
\begin{align}\label{5o}
& (1 \otimes F_1) \otimes (\frac{3}{2}w - H_{1_{\Delta}} w) \in (1\otimes F_2) \otimes F_{\Delta}w - ((E_1 F_1 + E_2 F_2) \otimes F_1) \otimes w + Z \\
& = (1\otimes F_2) \otimes F_{\Delta}w - (1 \otimes F_1) \otimes ((E_1 F_1 + E_2 F_2) \otimes 1)w \notag \\
& - \underbrace{(((E_1 F_1 + E_2 F_2) \otimes F_1) \otimes w- (1 \otimes F_1) \otimes ((E_1 F_1 + E_2 F_2) \otimes 1)w)}_{\in Z} + Z. \notag
\end{align}
Since the element $(E_1 F_1 + E_2 F_2) \otimes 1 \in \mathcal{B}$ acts on $W$ by the scalar $(p_1 + 2p_2 - 1)$ and $H_{1_\Delta} w_s = (2h_{1_\Delta} + h_{2_\Delta})w_s = (2p_1 + p_2 - s + \frac{1}{2})w_s$ (see \eqref{dircoho}), the \eqref{5o} implies
\begin{equation*}
(1 \otimes F_1) \otimes (p_2 - p_1 + s)w_s \in (1 \otimes F_2) \otimes F_{\Delta}w_s + Z \text{ for } s \in \{ 1,2, \cdots, p_1 - p_2 \}.
\end{equation*}
This finishes the proof.
\qed

\begin{cor}\label{E}
For $s \in \{ 1,2, \cdots, p_1 - p_2\}$ we have
\begin{align*}
(1 \otimes E_1 F_1) \otimes (p_2 - p_1 + s) w_s & \in  (1 \otimes E_1 F_2) \otimes F_{\Delta}w_s + Z \\
(1 \otimes E_2 F_1) \otimes (p_2 - p_1 + s) w_s & \in  (1 \otimes E_2 F_2) \otimes F_{\Delta}w_s + Z \\
(1 \otimes E_2) \otimes (p_2 - p_1 + s) w_s & \in - (1 \otimes E_1) \otimes F_{\Delta}w_s + Z
\end{align*}
\end{cor}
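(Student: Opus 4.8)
The plan is to deduce all three congruences directly from Proposition~\ref{F} by multiplying its defining relation on the left by a suitable element of $\mathcal{A}$. The only structural fact required is that the subspace $Z$ is stable under left multiplication by $\mathcal{A}$: indeed, for $c,a\in\mathcal{A}$, $b\in\mathcal{B}$, $w\in W$ one has $c(ab\otimes w-a\otimes bw)=(ca)b\otimes w-(ca)\otimes bw\in Z$, so $\mathcal{A}Z\subseteq Z$; hence any congruence of the form ``$u\in v+Z$'' remains valid after applying $1\otimes y$ on the left, for any $y\in C(\p)$.

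For the first two congruences I would multiply the relation of Proposition~\ref{F},
$$(1\otimes F_1)\otimes(p_2-p_1+s)w_s\in(1\otimes F_2)\otimes F_{\Delta}w_s+Z,$$
on the left by $1\otimes E_1$, respectively $1\otimes E_2$. Since $(1\otimes E_i)(1\otimes F_j)=1\otimes E_iF_j$ in $\mathcal{A}$, this gives immediately
$$(1\otimes E_iF_1)\otimes(p_2-p_1+s)w_s\in(1\otimes E_iF_2)\otimes F_{\Delta}w_s+Z\qquad(i=1,2),$$
which are the first two asserted identities.

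For the third congruence I would instead multiply the relation of Proposition~\ref{F} on the left by $1\otimes E_1E_2$, where $E_1E_2$ denotes the Clifford product in $C(\p)$. Using the Clifford relations---$E_iF_j+F_jE_i$ is a scalar, zero for $i\neq j$ and a fixed nonzero constant for $i=j$, and $E_1E_2=-E_2E_1$---a short computation gives $E_1E_2F_1=F_1E_1E_2+c\,E_2$ and $E_1E_2F_2=F_2E_1E_2-c\,E_1$ for one and the same nonzero constant $c$; the opposite sign in the two formulas is forced by the algebra automorphism of $C(\p)$ that exchanges $E_1\leftrightarrow E_2$ and $F_1\leftrightarrow F_2$. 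Since $F_1E_1E_2$ and $F_2E_1E_2$ lie in $C(\p)\p^+\p^+$, \eqref{pp} shows that $(1\otimes F_1E_1E_2)\otimes w$ and $(1\otimes F_2E_1E_2)\otimes w$ belong to $Z$. Substituting these into the left-multiplied relation, it collapses to $c\,(1\otimes E_2)\otimes(p_2-p_1+s)w_s\in-c\,(1\otimes E_1)\otimes F_{\Delta}w_s+Z$, and dividing by the nonzero scalar $c$ yields the third identity. I expect no genuine obstacle here; the only point requiring care is the bookkeeping with signs and normalizing constants in $C(\p)$ when commuting $E_1E_2$ past $F_1$ and $F_2$.
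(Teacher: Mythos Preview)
Your proposal is correct and follows essentially the same argument as the paper's proof: the first two congruences are obtained from Proposition~\ref{F} by left-multiplying with $1\otimes E_1$ and $1\otimes E_2$, and the third by left-multiplying with $1\otimes E_1E_2$ and then using the Clifford relations together with \eqref{pp} to discard the terms ending in $\p^+\p^+$. The paper simply writes out the Clifford computations explicitly (obtaining $E_1E_2F_1=2E_2+F_1E_1E_2$ and $E_1E_2F_2=-2E_1+F_2E_1E_2$, i.e.\ your constant is $c=2$), whereas you argue with an unspecified nonzero $c$; your added remark that $\mathcal{A}Z\subseteq Z$ makes explicit the justification the paper leaves implicit.
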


\proof
The first two claims follow from the previous lemma. Furthermore, we have
$$
(1 \otimes E_1 E_2 F_1) \otimes (p_2 - p_1 + s)w_s \in (1 \otimes E_1 E_2 F_2) \otimes F_{\Delta}w_s + Z.
$$
The proof follows from $E_1 E_2 F_1 = - E_1 F_1 E_2 = -(-2 - F_1 E_1) E_2=2 E_2 + F_1 E_1 E_2$, $E_1 E_2 F_2 = E_1 (-2 - F_2 E_2) = - 2 E_1 - E_1 F_2 E_2=- 2 E_1 + F_2 E_1 E_2$ and $(U(\mathfrak{g}) \otimes C(\mathfrak{p})\p^{+} \p^{+}) \otimes W \subset Z$.
\qed

\bigskip

From Lemma \ref{F}, Corollary \ref{E} and from the previous conclusions it follows that the vector space $\mathcal{A} \otimes_{\mathcal{B}} W$ is spanned by the set
\begin{align*}
\{ & (x \otimes F_2) \otimes w_s, (x \otimes E_1 F_2) \otimes w_s,  (x \otimes E_2 F_2) \otimes w_s,  (x \otimes E_1) \otimes w_s, \\
& (x \otimes F_1) \otimes w_{p_1 - p_2}, (x \otimes E_1 F_1) \otimes w_{p_1 - p_2}, \\
& (x \otimes E_2 F_1) \otimes w_{p_1 - p_2}, (x \otimes E_2) \otimes w_{p_1 - p_2} \, | \, s = 1, 2, \cdots, p_1 - p_2,  \\
& x \in \{ E_{1}^{n} F_{2}^{m}, (\text{ad}F)E_{1}^{n} F_{2}^{m}, \cdots, (\text{ad}F)^{n+m}E_{1}^{n} F_{2}^{m} \, | \, n, m \in \mathbb{N}_{0} \} \}.
\end{align*}

So far, we have reduced a big part of the Clifford algebra. Similarly, using the well known (and easy to prove) fact
$$
D (1 \otimes X) + (1 \otimes X) D = - 2 X \otimes 1, \quad \text{for} \quad X \in \p
$$
we can reduce a big part of the algebra $U(\g)$.

\begin{cor}\label{FiEi}
For $s \in \{ 1,2, \cdots, p_1 - p_2\}$ we have
\begin{align*}
(F_1 \otimes 1) \otimes (p_2 - p_1 + s) w_s & \in  (F_2 \otimes 1) \otimes F_{\Delta}w_s + Z \\
(E_2 \otimes 1) \otimes (p_2 - p_1 + s) w_s & \in  -(E_1 \otimes 1) \otimes F_{\Delta}w_s + Z.
\end{align*}
\end{cor}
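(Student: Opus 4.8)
The plan is to carry over the congruences of Proposition~\ref{F} and Corollary~\ref{E}, which live in the Clifford factor, into the $U(\g)$ factor by means of the anticommutation relation
\[
D(1\otimes X)+(1\otimes X)D=-2\,X\otimes 1,\qquad X\in\p .
\]
Read one way this expresses $X\otimes 1$ modulo a product $a\cdot D$ with $a=1\otimes X\in\mathcal{A}$; read the other way it expresses $D(1\otimes X)$ in terms of $X\otimes 1$ modulo such a product. Three elementary facts do all the bookkeeping. First, since $D\in\mathcal{I}\subseteq\mathcal{B}$ and $\mathcal{I}$ annihilates $W=H^{D}(X)$, every such tail disappears: because $Dw=0$ we have $aD\otimes w=aD\otimes w-a\otimes Dw\in Z$ for all $a\in\mathcal{A}$ and $w\in W$. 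Second, left multiplication by an element of $\mathcal{A}$ preserves $Z$, so the congruences of Proposition~\ref{F} and Corollary~\ref{E} may be multiplied on the left by $D$. Third, $F_{\Delta}w_s=w_{s+1}\in W$ by \eqref{dircoho} (with $w_{p_1-p_2+1}=0$), so tails of the shape $aD\otimes F_{\Delta}w_s$ also lie in $Z$.

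Granting these, I would first treat the identity for $F_1$. Applying the anticommutation relation with $X=F_1$ gives
\[
(F_1\otimes 1)\otimes(p_2-p_1+s)w_s=-\tfrac12\,D(1\otimes F_1)\otimes(p_2-p_1+s)w_s-\tfrac12\,(1\otimes F_1)D\otimes(p_2-p_1+s)w_s,
\]
and the last summand is a tail, hence in $Z$. Multiplying the congruence of Proposition~\ref{F} on the left by $D$ yields $D(1\otimes F_1)\otimes(p_2-p_1+s)w_s\in D(1\otimes F_2)\otimes F_{\Delta}w_s+Z$. Now apply the anticommutation relation with $X=F_2$ in the form $D(1\otimes F_2)=-2(F_2\otimes 1)-(1\otimes F_2)D$ and discard the resulting tail $-\tfrac12(1\otimes F_2)D\otimes F_{\Delta}w_s\in Z$; collecting the scalars gives exactly $(F_1\otimes 1)\otimes(p_2-p_1+s)w_s\in(F_2\otimes 1)\otimes F_{\Delta}w_s+Z$. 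The identity for $E_2$ is obtained in the same way: start from the anticommutation relation with $X=E_2$, insert the third (minus-sign) congruence of Corollary~\ref{E} after left multiplication by $D$, and close with the anticommutation relation for $X=E_1$; the minus sign there is what produces the minus sign in the statement.

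I do not anticipate a genuine obstacle here; the whole argument is a short formal manipulation modulo $Z$. The only point that needs attention is keeping track of which side of $1\otimes X$ the operator $D$ sits on, since only a right factor of $D$ can be absorbed into $Z$, whereas a left factor of $D$ must be kept and re-expanded through the anticommutation relation; and one must remember (trivially) that $F_{\Delta}$ preserves $W$ so that the second batch of tails also vanishes.
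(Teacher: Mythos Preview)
Your proposal is correct and follows essentially the same approach as the paper: both use the anticommutation relation $D(1\otimes X)+(1\otimes X)D=-2X\otimes 1$ to rewrite $(X\otimes 1)\otimes w$ as $-\tfrac12 D(1\otimes X)\otimes w$ modulo $Z$, then transport the congruences of Proposition~\ref{F} and Corollary~\ref{E} through left multiplication by $D$. The paper's write-up is marginally more compact in that it records $(F_i\otimes 1)\otimes w\in -\tfrac12 D(1\otimes F_i)\otimes w+Z$ once for both $i$ and then quotes Proposition~\ref{F}, but the content is identical.
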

\proof
Since $D(1 \otimes F_i) + (1 \otimes F_i)D = - 2 F_i \otimes 1$, for $i = 1, 2$, we have
\begin{align*}
(F_i \otimes 1) \otimes w & = - \frac{1}{2}(D(1 \otimes F_i) + (1 \otimes F_i)D) \otimes w \\
& = - \frac{1}{2}(D(1 \otimes F_i)) \otimes w - \frac{1}{2}\underbrace{((1 \otimes F_i) D \otimes w - (1 \otimes F_i) \otimes Dw)}_{\in Z} \\
&- \frac{1}{2}(1 \otimes F_i) \otimes \underbrace{Dw}_{= 0} \\
& \in - \frac{1}{2}D(1 \otimes F_i) \otimes w + Z.
\end{align*}
From lemma \ref{F} we get
$$
(p_2 - p_1 + s)(F_1 \otimes 1) \otimes w_s \in (F_2 \otimes 1) \otimes F_{\Delta}w_s + Z.
$$
Similarly, using $D(1 \otimes E_i) + (1 \otimes E_i)D = - 2E_i \otimes 1$ for $i = 1, 2$ and corollary \ref{E} we get
$$
(p_2 - p_1 + s)(E_2 \otimes 1) \otimes w_s \in -(E_1 \otimes 1) \otimes F_{\Delta}w_s + Z.
$$
\qed

One can easily show by induction that for each $n \in \mathbb{N}$ it holds
\begin{align}\label{calc}
E_2 F_2^{n} & = n F_{2}^{n-1} H_2 - n(n-1)F_{2}^{n-1} + F_{2}^{n} E_2 \\
F_2^{n}E_1 & = E_1 F_{2}^{n} - n F_{2}^{n-1}E \notag
\end{align}

Roughly speaking, the following proposition shows that we can replace the second highest weight vector of the $K$--type $V_{(n, -m)} \in U(\g)$ with the highest weight vectors of the $K$--types $V_{(n, -m)} \in U(\g)$ and $V_{(n-1, -(m-1))} \in U(\g)$.
\begin{prop}\label{l1}
For $s \in \{1, 2, \cdots,  p_1 - p_2 - 1\}$ and for nonnegative integers $m$ and $n$ we have
$$
((\text{ad}F)(E_{1}^{n} F_{2}^{m}) \otimes F_2) \otimes w_s \in \Span_{\mathbb{C}}\{ (E_{1}^{n} F_{2}^{m} \otimes F_2) \otimes w_{s+1}, (E_{1}^{n-1} F_{2}^{m-1} \otimes F_2) \otimes w_s \} + Z,
$$
where $E_1^{-1} = F_{2}^{-1} = 1$.
\end{prop}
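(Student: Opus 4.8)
The plan is to expand $(\text{ad}F)(E_1^nF_2^m)$ explicitly and to reduce every resulting term to the span of the two displayed generators, using only relations already available: $C\otimes W\subseteq Z$ from Lemma~\ref{halfdirac}, the congruences \eqref{anticom} and \eqref{pp}, the action \eqref{dircoho} of $\ka_\Delta\subseteq\mathcal{B}$ on $W$, the commutation formulas \eqref{calc}, and the explicit expressions for $\alpha$ on $\ka$ from \eqref{k-Dirac} and the proof of Proposition~\ref{F}. I would begin by recording a generalized form of Proposition~\ref{F}: because $Z$ is a left $\mathcal{A}$-submodule, multiplying the congruence of Proposition~\ref{F} on the left by $x\otimes1$ yields, for every $x\in U(\g)$ and every index $t\in\{1,\dots,p_1-p_2-1\}$,
$$
(p_2-p_1+t)\,(x\otimes F_1)\otimes w_t\equiv(x\otimes F_2)\otimes w_{t+1}\pmod Z ,
$$
and since $p_1>p_2$ the integer $p_2-p_1+t$ is nonzero on that range, so $(x\otimes F_1)\otimes w_t$ is then a scalar multiple of $(x\otimes F_2)\otimes w_{t+1}$ modulo $Z$.

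Next I would use that $\text{ad}F$ is a derivation with $[F,E_1]=E_2$, $[F,F_2]=-F_1$ and that $\p^+,\p^-$ are abelian to get $(\text{ad}F)(E_1^nF_2^m)=n\,E_1^{n-1}E_2F_2^m-m\,E_1^nF_2^{m-1}F_1$, and treat the two summands separately. For $m\,E_1^nF_2^{m-1}F_1\otimes F_2$ I would write it as $(E_1^nF_2^{m-1}\otimes1)(F_1\otimes F_2)$, use \eqref{anticom} and the left-ideal property of $Z$ to replace $F_1\otimes F_2$ by $F_2\otimes F_1$, and so land on a multiple of $(E_1^nF_2^m\otimes F_1)\otimes w_s$, which by the generalized Proposition~\ref{F} is a multiple of $(E_1^nF_2^m\otimes F_2)\otimes w_{s+1}$. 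For $n\,E_1^{n-1}E_2F_2^m\otimes F_2=(E_1^{n-1}\otimes1)(E_2\otimes F_2)(F_2^m\otimes1)$ I would substitute $E_2\otimes F_2=C-E_1\otimes F_1$; the $E_1\otimes F_1$ part again gives a multiple of $(E_1^nF_2^m\otimes F_1)\otimes w_s$, and in the $C$ part I would commute $C$ to the right of $F_2^m\otimes1$: the term with $C$ on the right vanishes modulo $Z$ since $C\otimes W\subseteq Z$, while $[C,F_2^m\otimes1]=[E_1,F_2^m]\otimes F_1+[E_2,F_2^m]\otimes F_2$ with $[E_1,F_2^m]=mF_2^{m-1}E$ and $[E_2,F_2^m]=mF_2^{m-1}H_2-m(m-1)F_2^{m-1}$ by \eqref{calc}. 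This leaves, modulo $Z$, a combination of $(E_1^{n-1}F_2^{m-1}E\otimes F_1)\otimes w_s$, $(E_1^{n-1}F_2^{m-1}H_2\otimes F_2)\otimes w_s$, $(E_1^{n-1}F_2^{m-1}\otimes F_2)\otimes w_s$ and the already-treated $(E_1^nF_2^m\otimes F_1)\otimes w_s$.

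The last task is to remove the extra $E$ and $H_2$ in the $U(\g)$-slot. For $(E_1^{n-1}F_2^{m-1}H_2\otimes F_2)\otimes w_s$ I would write $H_2\otimes1=H_{2_\Delta}-1\otimes\alpha(H_2)$: the $H_{2_\Delta}$ part acts on $w_s$ by the scalar from \eqref{dircoho}, and since $\alpha(H_2)$ is a combination of $1$ and the $E_iF_i$ by the formulas $\alpha(h_i)=-\tfrac12-\tfrac12E_iF_i$ recalled in the proof of Proposition~\ref{F}, a short Clifford-algebra computation shows $F_2\alpha(H_2)$ is a scalar multiple of $F_2$ plus an element of $C(\p)\p^-\p^-$, the latter dying modulo $Z$ by \eqref{pp}; so this term becomes a multiple of $(E_1^{n-1}F_2^{m-1}\otimes F_2)\otimes w_s$. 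For $(E_1^{n-1}F_2^{m-1}E\otimes F_1)\otimes w_s=(E_1^{n-1}F_2^{m-1}\otimes F_1)(E\otimes1)\otimes w_s$ I would write $E\otimes1=E_\Delta-1\otimes\alpha(E)$: the $E_\Delta$ part gives $(E_1^{n-1}F_2^{m-1}\otimes F_1)\otimes E_\Delta w_s$ modulo $Z$, and since $E_\Delta w_s=(s-1)(p_1-p_2-(s-1))w_{s-1}$ by \eqref{dircoho}, the generalized Proposition~\ref{F} applied at the index $s-1$ converts it into a multiple of $(E_1^{n-1}F_2^{m-1}\otimes F_2)\otimes w_s$ (legitimately, since the coefficient $s-1$ kills the term when $s=1$ and $p_2-p_1+(s-1)\neq0$ for $2\le s\le p_1-p_2-1$); and $\alpha(E)=-\tfrac12E_1F_2$ by \eqref{k-Dirac}, so $F_1\alpha(E)$ is once more a scalar multiple of $F_2$ plus an element of $C(\p)\p^-\p^-$, and \eqref{pp} reduces the rest to a multiple of $(E_1^{n-1}F_2^{m-1}\otimes F_2)\otimes w_s$. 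Collecting the pieces gives the claimed membership in $\Span_\C\{(E_1^nF_2^m\otimes F_2)\otimes w_{s+1},\,(E_1^{n-1}F_2^{m-1}\otimes F_2)\otimes w_s\}+Z$; the conventions $E_1^{-1}=F_2^{-1}=1$ handle $n=0$ or $m=0$, where the first summand of $(\text{ad}F)(E_1^nF_2^m)$ (respectively the $H_2$- and $E$-terms) simply drops out.

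The hard part will be the Clifford-algebra bookkeeping in the final step — expanding $F_2\alpha(H_2)$ and $F_1\alpha(E)$ via the anticommutation relations of $C(\p)$ and checking that whatever is not a multiple of $F_2$ lands in $C(\p)\p^-\p^-$ so that \eqref{pp} applies — together with the need to verify at each inversion that $p_2-p_1+s$ and $p_2-p_1+(s-1)$ are nonzero, which is exactly where the hypothesis $1\le s\le p_1-p_2-1$ is used.
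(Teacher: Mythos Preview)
Your proof is correct and follows the same overall strategy as the paper: expand $(\text{ad}F)(E_1^nF_2^m)=nE_1^{n-1}E_2F_2^m-mE_1^nF_2^{m-1}F_1$ and reduce each resulting piece via the diagonal $\ka$-action, the ideal relations, and \eqref{calc}. The only tactical difference is that the paper invokes Corollary~\ref{FiEi} to swap $F_1\leftrightarrow F_2$ and $E_2\leftrightarrow E_1$ in the $U(\g)$-slot (after first commuting $E_2$ past $F_2^m$), whereas you achieve the equivalent swap in the $C(\p)$-slot via Proposition~\ref{F}, \eqref{anticom}, and the substitution $E_2\otimes F_2=C-E_1\otimes F_1$ followed by commuting $C$ past $F_2^m\otimes1$; this leaves you with an $(E\otimes F_1)\otimes w_s$ term where the paper has $(E\otimes F_2)\otimes w_{s+1}$, but both are dispatched identically through $E_\Delta$ and $\alpha(E)=-\tfrac12E_1F_2$.
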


\proof
From $[F, E_1] = E_2$, $[F, F_2] = - F_1$, $[E_1, E_2] = 0$ a $[F_1, F_2] = 0$ we get
$$
(\text{ad}F)(E_{1}^{n} F_{2}^{m}) = n \cdot E_{1}^{n-1} E_2 F_{2}^{m} - m E_{1}^{n} F_{2}^{m-1} F_1.
$$
From here and from \eqref{calc} it follows that
\begin{align}\label{(1)}
((\text{ad}F)(E_{1}^{n} F_{2}^{m}) \otimes F_2) \otimes w_s & = nm(E_{1}^{n-1} F_{2}^{m-1} H_2 \otimes F_2) \otimes w_s \\
& - nm(m-1)(E_{1}^{n-1} F_{2}^{m-1} \otimes F_2) \otimes w_s  \notag \\
&+ n(E_{1}^{n-1}F_{2}^{m} E_2 \otimes F_2) \otimes w_s \notag \\
&- m(E_{1}^{n} F_{2}^{m-1}F_1 \otimes F_2) \otimes w_s. \notag
\end{align}
From $(p_2 - p_1 + s)(F_1 \otimes 1) \otimes w_s \in (F_2 \otimes 1) \otimes w_{s+1} + Z$ (see corollary \eqref{FiEi} ) we get
\begin{align}\label{(2)}
& (p_2 - p_1 + s)(E_{1}^{n}F_{2}^{m-1}F_1 \otimes F_2) \otimes w_s \\
& \in (p_2 - p_1 + s)(E_{1}^{n}F_{2}^{m} \otimes F_2) \otimes w_{s+1} + Z. \notag
\end{align}
Furthermore, using $H_{2_{\Delta}} w_s = (h_{1_\Delta} + 2 h_{2_\Delta})w_s = (p_1 + 2p_2 + s - \frac{1}{2})w_s$,
$$\alpha(H_2) = \alpha(h_1 + 2 h_2) = - \frac{1}{2} E_1 F_1 - E_2 F_2 - \frac{3}{2},$$  and \eqref{pp} we get
\begin{align*}
(H_2 \otimes F_2) \otimes w_s & = (1 \otimes F_2)(H_{2_{\Delta}} - 1 \otimes \alpha(H_2)) \otimes w_s \\
& \in (p_1 + 2p_2 + s - 1) \cdot (1 \otimes F_2) \otimes w_s + Z \\
\end{align*}
Therefore
\begin{align}\label{(3)}
& (p_2 - p_1 + s)(E_{1}^{n-1} F_{2}^{m-1} H_2 \otimes F_2) \otimes w_s \\
& \in (p_2 - p_1 + s)(p_1 + 2p_2 + s - 1)(E_{1}^{n-1}F_{2}^{m-1} \otimes F_2) \otimes w_s + Z. \notag
\end{align}
Similarly, using $(p_2 - p_1 + s)(E_2 \otimes 1) \otimes w_s \in - (E_1 \otimes 1) \otimes w_{s+1} + Z$, and \eqref{calc}
we get
\begin{align*}
& (p_2 - p_1 + s)(E_{1}^{n-1} F_{2}^{m} E_2 \otimes F_2) \otimes w_s \in - (E_{1}^{n-1} F_{2}^{m} E_1 \otimes F_2) \otimes w_{s+1} + Z \\
& = - (E_{1}^{n-1}(E_1 F_{2}^{m} - m F_{2}^{m-1} E) \otimes F_2) \otimes w_{s+1} + Z.
\end{align*}
Since
\begin{align*}
(E \otimes F_2) \otimes w_{s+1} & = (1 \otimes F_2)(E_{\Delta} - 1 \otimes \alpha(E)) \otimes w_{s+1} \\
& = (1 \otimes F_2)\underbrace{(E_{\Delta} \otimes w_{s+1} - (1 \otimes 1) \otimes E_{\Delta} w_{s+1})}_{\in Z} \\
& + (1 \otimes F_2) \otimes E_{\Delta} w_{s+1} - 1 \otimes F_2 \cdot (- \frac{1}{2} E_1 F_2) \otimes w_{s+1} \\
& \in s(p_1 - p_2 - s)(1 \otimes F_2) \otimes w_s + Z,
\end{align*}
it follows that
\begin{align}\label{(4)}
& (p_2 - p_1 + s)(E_{1}^{n-1} F_{2}^{m} E_2 \otimes F_2) \otimes w_s  \in - (E_{1}^{n} F_{2}^{m} \otimes F_2) \otimes w_{s+1} \\
& + ms(p_1 - p_2 - s)(E_{1}^{n-1} F_{2}^{m-1} \otimes F_2) \otimes w_s + Z. \notag
\end{align}
The proof follows from \eqref{(1)}, \eqref{(2)}, \eqref{(3)} and \eqref{(4)}.
\qed

The next proposition shows that a similar result holds for $s = p_1 - p_2$:

\begin{prop}\label{l2}
For nonnegative integers $m$ and $n$ we have
\begin{align*}
& ((\text{ad}F)(E_{1}^{n} F_{2}^{m}) \otimes F_2) \otimes w_{p_1 - p_2} \\
& \in \Span_{\mathbb{C}}\{ (E_{1}^{n} F_{2}^{m} \otimes F_1) \otimes w_{p_1 - p_2}, (E_{1}^{n-1} F_{2}^{m-1} \otimes F_2) \otimes w_{p_1 - p_2} \} + Z,
\end{align*}
where $E_{1}^{-1} = F_{2}^{-1} = 1$.
\end{prop}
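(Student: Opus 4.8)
The plan is to run the argument of Proposition~\ref{l1} as far as it goes and to replace the two steps that degenerate when $s=p_1-p_2$ by substitute arguments. Exactly as in \eqref{(1)}, using $[F,E_1]=E_2$, $[F,F_2]=-F_1$, $[E_1,E_2]=[F_1,F_2]=0$ and \eqref{calc}, one writes $((\text{ad}F)(E_1^nF_2^m)\otimes F_2)\otimes w_{p_1-p_2}$ as the sum of the four terms
$$
nm(E_1^{n-1}F_2^{m-1}H_2\otimes F_2)\otimes w_{p_1-p_2},\quad -nm(m-1)(E_1^{n-1}F_2^{m-1}\otimes F_2)\otimes w_{p_1-p_2},
$$
$$
n(E_1^{n-1}F_2^m E_2\otimes F_2)\otimes w_{p_1-p_2},\quad -m(E_1^nF_2^{m-1}F_1\otimes F_2)\otimes w_{p_1-p_2}.
$$
The second term already has the required shape, and the first is treated as in Proposition~\ref{l1}: the computation of $(H_2\otimes F_2)\otimes w_s$ given there — via the $H_{2_\Delta}$-eigenvalue on $w_s$, the formula for $\alpha(H_2)$, and \eqref{pp} — is valid for $s=p_1-p_2$ and shows $(H_2\otimes F_2)\otimes w_{p_1-p_2}$ to be, modulo $Z$, a scalar multiple of $(1\otimes F_2)\otimes w_{p_1-p_2}$; left multiplication by $E_1^{n-1}F_2^{m-1}\otimes 1$ then puts the first term into the span of $(E_1^{n-1}F_2^{m-1}\otimes F_2)\otimes w_{p_1-p_2}$. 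Crucially, and unlike in Proposition~\ref{l1}, no factor $p_2-p_1+s$ is produced here; this is precisely why Corollary~\ref{FiEi}, whose relations carry that factor, cannot be used for the remaining two terms once $s=p_1-p_2$.

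For the fourth ($F_1$-)term I would establish the exchange identity $(E_1^nF_2^{m-1}F_1\otimes F_2)\otimes w\in(E_1^nF_2^m\otimes F_1)\otimes w+Z$ for $w\in W$: writing $F_1\otimes1=-\tfrac12(D(1\otimes F_1)+(1\otimes F_1)D)$, the $(E_1^nF_2^{m-1}\otimes1)D(1\otimes F_1F_2)$-part lies in $U(\g)\otimes C(\p)\p^-\p^-$, hence in $Z$ by \eqref{pp}; in the $(1\otimes F_1)D(1\otimes F_2)$-part one applies $D(1\otimes F_2)=-2F_2\otimes1-(1\otimes F_2)D$ again, the resulting $(1\otimes F_1F_2)D$-tail vanishes because $D\in\mathcal B$ and $Dw=0$, and what remains is $(E_1^nF_2^{m-1}\otimes1)(1\otimes F_1)(F_2\otimes1)\otimes w=(E_1^nF_2^m\otimes F_1)\otimes w$. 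This settles the fourth term.

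The third ($E_2$-)term is the main obstacle. Since $C=E_1\otimes F_1+E_2\otimes F_2\in\mathcal I$ annihilates $W$ (Lemma~\ref{halfdirac}), one has $a(E_2\otimes F_2)\otimes w\in -a(E_1\otimes F_1)\otimes w+Z$ for all $a\in\mathcal A$, $w\in W$; taking $a=E_1^{n-1}F_2^m\otimes1$ and then using $F_2^mE_1=E_1F_2^m-mF_2^{m-1}E$ from \eqref{calc} gives
$$
(E_1^{n-1}F_2^m E_2\otimes F_2)\otimes w_{p_1-p_2}\in -(E_1^nF_2^m\otimes F_1)\otimes w_{p_1-p_2}+m(E_1^{n-1}F_2^{m-1}E\otimes F_1)\otimes w_{p_1-p_2}+Z,
$$
the first term being of the required shape. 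For the leftover $E$-term ($E\in\ka$) I would write $E\otimes1=E_\Delta-1\otimes\alpha(E)$: as $\alpha(E)=-\tfrac12E_1F_2$ and $F_1E_1F_2$ is a scalar multiple of $F_2$ modulo $C(\p)\p^-\p^-$, the $\alpha(E)$-part reduces modulo $Z$ to a multiple of $(E_1^{n-1}F_2^{m-1}\otimes F_2)\otimes w_{p_1-p_2}$; and as $E_\Delta\in\mathcal B$ with $E_\Delta w_{p_1-p_2}=(p_1-p_2-1)w_{p_1-p_2-1}$ by \eqref{dircoho}, the $E_\Delta$-part becomes $(p_1-p_2-1)(E_1^{n-1}F_2^{m-1}\otimes F_1)\otimes w_{p_1-p_2-1}$ modulo $Z$, to which Proposition~\ref{F} applies at the index $s=p_1-p_2-1$ — where $p_2-p_1+s=-1\ne0$ and $F_\Delta w_{p_1-p_2-1}=w_{p_1-p_2}$ — producing again a multiple of $(E_1^{n-1}F_2^{m-1}\otimes F_2)\otimes w_{p_1-p_2}$. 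Collecting the four contributions proves the proposition, the degenerate cases $m=0$ or $n=0$ collapsing to a single term.

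The delicate point is this last detour: the natural relation from Corollary~\ref{FiEi} is vacuous at $s=p_1-p_2$ (its coefficient $p_2-p_1+s$ vanishes and $F_\Delta w_{p_1-p_2}=0$), so one must raise the weight inside $U(\g)$ by $E$, pay for it with $E_\Delta w_{p_1-p_2}=(p_1-p_2-1)w_{p_1-p_2-1}$, and descend again via Proposition~\ref{F} one weight below the top. The accompanying Clifford bookkeeping — that products such as $F_1E_1F_2$, $F_2E_1F_1$, $F_2E_2F_1$ and $F_2E_2F_2$ are scalar multiples of a single basis vector of $\p$ modulo $C(\p)\p^-\p^-$ — is routine and mirrors the computations already invoked above.
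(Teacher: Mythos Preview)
Your proof is correct and follows essentially the same route as the paper: you use the decomposition \eqref{(1)}, handle the $H_2$-term as in Proposition~\ref{l1}, swap $E_2\otimes F_2$ for $-E_1\otimes F_1$ via $C\in\mathcal I$, and reduce the resulting $(E\otimes F_1)$-tail by writing $E\otimes1=E_\Delta-1\otimes\alpha(E)$ and invoking Proposition~\ref{F} at $s=p_1-p_2-1$. The only difference is cosmetic: for the $F_1$-term you re-derive the exchange $(F_1\otimes F_2)\otimes w\equiv(F_2\otimes F_1)\otimes w$ from the Dirac identity $D(1\otimes X)+(1\otimes X)D=-2X\otimes1$, whereas the paper simply quotes the already established relation \eqref{anticom}.
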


\begin{proof}
From
$$
(F_1 \otimes F_2) \otimes w_{p_1 - p_2} = \underbrace{(F_1 \otimes F_2 - F_2 \otimes F_1) \otimes w_{p_1 - p_2}}_{\in Z} + (F_2 \otimes F_1) \otimes w_{p_1 - p_2}
$$
we get
\begin{equation}\label{(5)}
(E_{1}^{n} F_{2}^{m-1} F_1 \otimes F_2) \otimes w_{p_1 - p_2} \in (E_{1}^{n}F_{2}^{m} \otimes F_1) \otimes w_{p_1 - p_2} + Z
\end{equation}
Furthermore, from the proof of the proposition \ref{l1} we get
\begin{equation}\label{(6)}
(E_{1}^{n-1} F_{2}^{m-1} H_2 \otimes F_2) \otimes w_{p_1 - p_2} \in (2 p_1 + p_2 - 1) \cdot (E_{1}^{n-1} F_{2}^{m-1} \otimes F_2) \otimes w_{p_1 - p_2} + Z.
\end{equation}
Since
$$
(E_2 \otimes F_2) \otimes w_{p_1 - p_2} = \underbrace{(E_2 \otimes F_2 + E_1 \otimes F_1) \otimes w_{p_1 - p_2}}_{\in Z} - (E_1 \otimes F_1) \otimes w_{p_1 - p_2},
$$
then
\begin{align*}
(E_{1}^{n-1} F_{2}^{m} E_2 \otimes F_2) \otimes w_{p_1 - p_2}  \in & -(E_{1}^{n-1} F_{2}^{m} E_1 \otimes F_1) \otimes w_{p_1 - p_2} + Z \\
 = & - (E_{1}^{n-1}(E_1 F_{2}^{m} - m F_{2}^{m-1} E) \otimes F_1) \otimes w_{p_1 - p_2} + Z \\
 = &- (E_{1}^{n} F_{2}^{m} \otimes F_1) \otimes w_{p_1 - p_2} \\
 & + m(E_{1}^{n-1} F_{2}^{m-1} E \otimes F_1) \otimes w_{p_1 - p_2} + Z.
\end{align*}
Furthermore, using \eqref{pp}, and $E_{\Delta} w_{p_1 - p_2} = (p_1 - p_2 - 1)w_{p_1 - p_2 - 1}$ we get
\begin{align*}
(E \otimes F_1) \otimes w_{p_1 - p_2} & = (1 \otimes F_1)(E_{\Delta} - 1 \otimes \alpha(E)) \otimes w_{p_1 - p_2} \\
& \in (p_1 - p_2 - 1)(1 \otimes F_1) \otimes w_{p_1 - p_2 - 1} - (1 \otimes F_2) \otimes w_{p_1 - p_2} + Z.
\end{align*}
Now, from
$$
(p_2 - p_1 + (p_1 - p_2 - 1))(1 \otimes F_1) \otimes w_{p_1 - p_2 - 1} \in  (1 \otimes F_2) \otimes w_{p_1 - p_2} + Z
$$
we get
\begin{align*}
(E \otimes F_1) \otimes w_{p_1 - p_2} & \in (p_2 - p_1 + 1)(1 \otimes F_2) \otimes w_{p_1 - p_2} - (1 \otimes F_2) \otimes w_{p_1 - p_2} + Z \\
& = (p_2 - p_1)(1 \otimes F_2) \otimes w_{p_1 - p_2} + Z
\end{align*}
and therefore
\begin{align}\label{(7)}
& (E_{1}^{n-1} F_{2}^{m} E_2 \otimes F_2) \otimes w_{p_1 - p_2} \\
& \in -(E_{1}^{n} F_{2}^{m} \otimes F_1) \otimes w_{p_1 - p_2} + m(p_2 - p_1)(E_{1}^{n-1} F_{2}^{m-1} \otimes F_2) \otimes w_{p_1 - p_2} + Z. \notag
\end{align}
The proof follows from \eqref{(1)}, \eqref{(5)}, \eqref{(6)} and \eqref{(7)}.
\end{proof}
Finally, from propositions \ref{l1} and \ref{l2} and previous conclusions we get
\begin{theorem}\label{si}
One generating system of the vector space $\mathcal{A} \otimes_{\mathcal{B}} W$ is given by
\begin{align*}
\{ & (E_{1}^{n} F_{2}^{m} \otimes F_2) \otimes w_s, (E_{1}^{n} F_{2}^{m} \otimes E_1 F_2) \otimes w_s, (E_{1}^{n} F_{2}^{m} \otimes E_2 F_2) \otimes w_s, \\
& (E_{1}^{n} F_{2}^{m} \otimes E_1) \otimes w_s, \\
& (x \otimes F_1) \otimes w_{p_1 - p_2}, (x \otimes E_1 F_1) \otimes w_{p_1 - p_2},  (x \otimes E_2 F_1) \otimes w_{p_1 - p_2}, (x \otimes E_2) \otimes w_{p_1 - p_2} \\
& s = 1, 2, \cdots, p_1 - p_2, \\
& x \in \{ E_{1}^{n} F_{2}^{m}, (\text{ad}F)E_{1}^{n} F_{2}^{m}, \cdots, (\text{ad}F)^{n+m}E_{1}^{n} F_{2}^{m} \, | \, n, m \in \mathbb{N}_{0} \} \}.
\end{align*}
\end{theorem}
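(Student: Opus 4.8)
The plan is to cut the generating system exhibited just before the statement down to the one claimed here. Write $M_0=\Span_{\mathbb{C}}(\text{the claimed set})+Z$; since the earlier system spans $\mathcal{A}\otimes_{\mathcal{B}}W$, it suffices to prove that
$$
\big((\text{ad}F)^{j}(E_1^{n}F_2^{m})\otimes y\big)\otimes w_s\in M_0 \qquad (j\ge 1,\ n,m\in\mathbb{N}_0,\ s\in\{1,\dots,p_1-p_2\},\ y\in\{F_2,E_1F_2,E_2F_2,E_1\}),
$$
as every other member of the earlier system is already of the claimed form.

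First I would establish two stability properties of $M_0$ under left multiplication by elements of $\mathcal{A}$ (which automatically preserves $Z$). (i) $M_0$ is stable under $1\otimes E_1$ and $1\otimes E_2$: on the claimed generators one only needs the Clifford relations $E_1^{2}=E_2^{2}=0$, $E_1E_2=-E_2E_1$, the identities $E_1E_2F_2=-2E_1+F_2E_1E_2$ and $E_1E_2F_1=2E_2+F_1E_1E_2$, and \eqref{pp}, after which every term is again of the claimed form or lies in $Z$. Granting (i), the cases $y\in\{E_1F_2,E_2F_2\}$ follow from the case $y=F_2$ via $(x\otimes E_iF_2)\otimes w=(1\otimes E_i)\cdot\big((x\otimes F_2)\otimes w\big)$, and the case $y=E_1$ follows from $y=E_1F_2$ through $2E_1=E_2E_1F_2+F_2E_1E_2$, whose last summand lies in $C(\mathfrak{p})\mathfrak{p}^{+}\mathfrak{p}^{+}$ and hence annihilates $W$ by \eqref{pp}. (ii) $M_0$ is stable under $F_{\Delta}$: from $F_{\Delta}(x\otimes y)=Fx\otimes y+x\otimes\alpha(F)y$ with $Fx=xF+(\text{ad}F)(x)$, $\alpha(F)=-\frac{1}{2}E_2F_1$ and $F\otimes 1=F_{\Delta}+\frac{1}{2}(1\otimes E_2F_1)$, one finds that $F\otimes 1$ is absorbed through the tensor relation over $\mathcal{B}$ (so $w_s\mapsto w_{s+1}$, with the convention that $w_{p_1-p_2+1}=0$), $(\text{ad}F)(x)$ again belongs to the admissible string of $U(\mathfrak{g})$--weight vectors, and the leftover Clifford products such as $E_2F_1F_2$ and $E_1E_2F_1F_2$ annihilate $W$ by Lemma~\ref{E1E2} and \eqref{pp}; the only genuinely new contributions, of the shape $\big((\text{ad}F)(E_1^{n'}F_2^{m'})\otimes F_2\big)\otimes w_t$ and the ``$F_1$--type'' terms $(E_1^{n'}F_2^{m'}\otimes F_1)\otimes w_t$, $(E_1^{n'}F_2^{m'}\otimes E_1F_1)\otimes w_t$, $(E_1^{n'}F_2^{m'}\otimes E_2F_1)\otimes w_t$, are absorbed by Propositions~\ref{l1} and~\ref{l2} in the first case and by Lemma~\ref{F} and Corollary~\ref{E} when $t<p_1-p_2$ in the second, while for $t=p_1-p_2$ the $F_1$--type terms are already claimed generators.

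With (i) and (ii) in place the displayed claim follows by induction on $j$: the case $j=0$ is a claimed generator, the case $j=1$ is Propositions~\ref{l1} and~\ref{l2} together with (i), and for the step I would set $z=(\text{ad}F)^{j-1}(E_1^{n}F_2^{m})$ and, expanding $F_{\Delta}\cdot\big((z\otimes F_2)\otimes w_s\big)$ exactly as in (ii), using $[F,z]=(\text{ad}F)^{j}(E_1^{n}F_2^{m})$ and $F_2E_2F_1=-2F_1+E_2F_1F_2$, arrive at
$$
\big((\text{ad}F)^{j}(E_1^{n}F_2^{m})\otimes F_2\big)\otimes w_s = F_{\Delta}\cdot\big((z\otimes F_2)\otimes w_s\big)-(z\otimes F_2)\otimes w_{s+1}+(z\otimes F_1)\otimes w_s .
$$
By the inductive hypothesis $(z\otimes F_2)\otimes w_s$ and $(z\otimes F_2)\otimes w_{s+1}$ lie in $M_0$, so the first term on the right lies in $M_0$ by (ii); and $(z\otimes F_1)\otimes w_s$ lies in $M_0$ by Lemma~\ref{F} together with the inductive hypothesis when $s<p_1-p_2$, and is a claimed generator when $s=p_1-p_2$. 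This settles $y=F_2$ for $j$, and the remaining $y$ then follow from (i).

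The hard part will be the boundary index $s=p_1-p_2$: there $F_{\Delta}w_{p_1-p_2}=0$ and the scalar $p_2-p_1+s$ occurring in Lemma~\ref{F} and Corollary~\ref{E} vanishes, so an $F_1$ in the Clifford slot can no longer be traded for an $F_2$. This is precisely why the statement must retain all four ``$F_1$--type'' generators at $w_{p_1-p_2}$ for the whole weight-vector string $x$, while the ``$F_2$--type'' generators survive only for the highest weight vector $x=E_1^{n}F_2^{m}$; carrying out (ii) carefully and verifying that every term that gets stuck at $s=p_1-p_2$ really does land among the retained generators is where the bulk of the lengthy but routine Clifford bookkeeping lies.
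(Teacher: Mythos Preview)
Your proposal is correct and amounts to a detailed execution of what the paper states in one line (``from Propositions~\ref{l1} and~\ref{l2} and previous conclusions''): the key inputs are exactly Propositions~\ref{l1}, \ref{l2}, Proposition~\ref{F} and Corollary~\ref{E}, and the task is to iterate them from the second highest weight vector to the whole $(\mathrm{ad}\,F)$--string and from the Clifford slot $F_2$ to $E_1F_2$, $E_2F_2$, $E_1$.

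Your organizational device---establishing stability of $M_0$ under $1\otimes E_i$ and under $F_\Delta$, then inducting on $j$ via the identity
\[
\bigl((\mathrm{ad}\,F)^{j}(E_1^{n}F_2^{m})\otimes F_2\bigr)\otimes w_s
= F_\Delta\!\cdot\!\bigl((z\otimes F_2)\otimes w_s\bigr)
-(z\otimes F_2)\otimes w_{s+1}
+(z\otimes F_1)\otimes w_s
\]
---is not in the paper, but it is a natural and efficient way to package the iteration the paper leaves to the reader. One small point: your sketch of (ii) names only contributions with Clifford slot $F_2$ or of ``$F_1$--type''; when you apply $F_\Delta$ to the generators with Clifford slot $E_1F_2$, $E_2F_2$, $E_1$ (respectively $E_1F_1$, $E_2F_1$, $E_2$) you will also produce terms such as $\bigl((\mathrm{ad}\,F)(E_1^{n}F_2^{m})\otimes E_1\bigr)\otimes w_s$. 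These are harmless, since those six generator types are obtained from the $F_2$-- and $F_1$--types by left multiplication with $1\otimes E_1$, $1\otimes E_2$, and $[F_\Delta,1\otimes E_1]=1\otimes E_2$, $[F_\Delta,1\otimes E_2]=0$; so (ii) for all eight types reduces, via (i), to the two cases you do treat. With that clarification your argument is complete.
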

There are no more obvious relations between the elements in \ref{si}. We will show that they form a basis  of $\mathcal{A} \otimes_{\mathcal{B}}W$ a little later.

\section{The construction of the nonholomorphic discrete series representations of the group $SU(2,1)$ via Dirac induction}

From Theorem \ref{basis1} it is easy to write down one basis for $X \otimes S$. However, we will give another basis for the space $X \otimes S$ which will be more useful to prove what we claim. First we will give bases for some finite-dimensional subspaces of the space $X \otimes 1$.
Let us denote
$$
V_{(p_1 + n, p_2 - m)} = \text{span}_{\mathbb{C}} \{ v_{p_1 + n, p_2 - m}^{(p_1 + n) - (p_2 - m) - 2s} \, | \, s \in \{ 0,1, \cdots, (p_1 + n) - (p_2 - m) \}\}.
$$
From \eqref{paction} it follows that one basis for the vector space
$$
V_{k,l} = \displaystyle \sum_{0 \leq n \leq k, 0 \leq m \leq l} V_{(p_1 + n, p_2 - m)} \otimes 1
$$
is given by
\begin{equation}\label{finite_basis}
\{ (F^s E_{1}^{n} F_{2}^{m} \otimes F_2) \cdot w_1 \, | \, 0 \leq n \leq k, \quad 0 \leq m \leq l, \quad 0 \leq s \leq (p_1 + n) - (p_2 - m)\}.
\end{equation}
Our goal is to get a basis for $X \otimes S$ which is similar to the generating system from the theorem \ref{si}.
The next result is analogous to Proposition \ref{F}.
\begin{lemma}\label{1oF_1}
For $s \in \{ 1, 2, \cdots, p_1 - p_2\}$ we have $(p_2 - p_1 + s)(1 \otimes F_1)\cdot w_s = (1 \otimes F_2) \cdot F_{\Delta}w_s$ and $(p_2 - p_1 + s)(1 \otimes E_2) \cdot w_s = - (1 \otimes E_1) \cdot F_{\Delta} w_s$.
\end{lemma}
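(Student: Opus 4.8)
The plan is to prove both identities of Lemma \ref{1oF_1} by a direct computation inside the concrete module $X \otimes S$. In contrast with the analogous Proposition \ref{F}, which lived in the abstract induced module $\mathcal{A}\otimes_{\mathcal B}W$ and could only be argued modulo $Z$, here both sides of each claimed equality are honest vectors of $X\otimes S$, so it suffices to evaluate them termwise. The only inputs are the $\ka_{\Delta}$-action on $W$ recorded in \eqref{dircoho} and the Clifford action of $\p$ on $S=\Span_{\C}\{1,E_1,E_2,E_1\wedge E_2\}$.

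First I would record the relevant Clifford multiplications, all implicit in the computations preceding Lemma \ref{E1E2}: on degree-one elements one has $F_i\cdot E_j=-2\delta_{ij}$ and $E_i\cdot E_j=E_i\wedge E_j$ (so $E_1\cdot E_2=E_1\wedge E_2=-\,E_2\cdot E_1$ and $E_i\cdot E_i=0$), together with $F_i\cdot 1=0$. Abbreviating $a=p_1-p_2$ and $v_s=v_{p_1,p_2}^{\,a-2s}$, so that $w_s=v_s\otimes E_1-(a-s+1)\,v_{s-1}\otimes E_2$ and, by \eqref{dircoho}, $F_{\Delta}w_s=w_{s+1}=v_{s+1}\otimes E_1-(a-s)\,v_s\otimes E_2$ (with the convention $w_{a+1}=0$), a short calculation gives $(1\otimes F_1)\cdot w_s=-2\,(v_s\otimes 1)$, $(1\otimes F_2)\cdot F_{\Delta}w_s=2(a-s)\,(v_s\otimes 1)$, $(1\otimes E_2)\cdot w_s=-\,\big(v_s\otimes(E_1\wedge E_2)\big)$, and $(1\otimes E_1)\cdot F_{\Delta}w_s=-(a-s)\,\big(v_s\otimes(E_1\wedge E_2)\big)$. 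Multiplying the first and third of these by $p_2-p_1+s=-(a-s)$ yields exactly $(1\otimes F_2)\cdot F_{\Delta}w_s$ and $-(1\otimes E_1)\cdot F_{\Delta}w_s$, respectively, which is the assertion.

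I do not expect a real obstacle; the work is purely bookkeeping. The points needing care are the normalization constant in the Clifford action (the factor $-2$), the index convention that makes $w_{p_1-p_2+1}$ and hence $F_{\Delta}w_{p_1-p_2}$ vanish, so that for $s=p_1-p_2$ both sides are simply $0$, and the fact that on $W$ the operator $F_{\Delta}$ may be replaced by $w_s\mapsto w_{s+1}$ because $W=H^{D}(X)$ is a full $\widetilde K$-isotypic subspace of $X\otimes S$ and therefore $\ka_{\Delta}$-stable. If one prefers to avoid quoting \eqref{dircoho}, the same formula follows by computing $F_{\Delta}w_s=(F\otimes 1+1\otimes\alpha(F))\,w_s$ directly from \eqref{kaction} and $\alpha(F)=-\tfrac12 E_2F_1$, which also serves as a useful consistency check.
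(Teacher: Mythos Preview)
Your proposal is correct and follows essentially the same approach as the paper's own proof: a direct termwise evaluation of both sides using the Clifford action on $S$ and the identity $F_{\Delta}w_s=w_{s+1}$ from \eqref{dircoho}, with the boundary case $s=p_1-p_2$ handled by the vanishing of both sides. Your abbreviations $a=p_1-p_2$ and $v_s$ streamline the bookkeeping, but the computations and the logic are identical to the paper's.
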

\begin{proof}
For $s \in \{ 1, 2, \cdots, p_1 - p_2 - 1 \}$ we have
\begin{align*}
&(1 \otimes F_2) \cdot w_{s+1} = (1 \otimes F_2)(v_{p_1, p_2}^{p_1 - p_2 - 2(s+1)} \otimes E_1 - (p_1 - p_2 - s)v_{p_1, p_2}^{p_1 - p_2 - 2s} \otimes E_2) \\
& = 2(p_1 - p_2 - s)v_{p_1, p_2}^{p_1 - p_2 - 2s} \otimes 1, \\
&(1 \otimes F_1) \cdot w_s  = (1 \otimes F_1)(v_{p_1, p_2}^{p_1 - p_2 - 2s} \otimes E_1 - (p_1 - p_2 - (s-1))v_{p_1, p_2}^{p_1 - p_2 - 2(s-1)} \otimes E_2) \\
& = -2 v_{p_1, p_2}^{p_1 - p_2 - 2s} \otimes 1, \\
&(1 \otimes E_1) \cdot w_{s+1}  = (1 \otimes E_1)(v_{p_1, p_2}^{p_1 - p_2 - 2(s+1)} \otimes E_1 - (p_1 - p_2 - s)v_{p_1, p_2}^{p_1 - p_2 - 2s} \otimes E_2) \\
& = -(p_1 - p_2 - s)v_{p_1, p_2}^{p_1 - p_2 - 2s} \otimes E_1 \wedge E_2, \\
&(1 \otimes E_2) \cdot w_s = (1 \otimes E_2)(v_{p_1, p_2}^{p_1 - p_2 - 2s} \otimes E_1 - (p_1 - p_2 - (s-1))v_{p_1, p_2}^{p_1 - p_2 - 2(s-1)} \otimes E_2) \\
& = - v_{p_1, p_2}^{p_1 - p_2 - 2s} \otimes E_1 \wedge E_2.
\end{align*}
This implies the claim for $s \in \{ 1, 2, \cdots, p_1 - p_2 - 1\}$. The case $s = p_1 - p_2$ is obvious.
\end{proof}
Furthermore, we have the following analogue of Corollary \ref{FiEi}:
\begin{lemma}\label{F_1o1}
For $s \in \{ 1,2, \cdots, p_1 - p_2 \}$ we have
$$
(p_2 - p_1 + s)(F_1 \otimes 1) \cdot w_s = (F_2 \otimes 1) \cdot F_{\Delta} w_s
$$
and
$$
(p_2 - p_1 + s)(E_2 \otimes 1) \cdot w_s \in - (E_1 \otimes 1) \cdot F_{\Delta} w_s.
$$
\end{lemma}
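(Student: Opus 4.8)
The plan is to reproduce, inside $X\otimes S$, the argument that was used for Corollary \ref{FiEi}, now with Lemma \ref{1oF_1} playing the role that Proposition \ref{F} and Corollary \ref{E} played there, and with the ``mod $Z$'' congruences replaced by genuine identities in $X\otimes S$.

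First I would isolate the three facts I need. The first is the elementary identity $D(1\otimes Y)+(1\otimes Y)D=-2(Y\otimes 1)$ for $Y\in\p$, exactly the one used just before Corollary \ref{FiEi}. The second is that, since $\pi_p$ is unitary, $W=H^{D}(X)=\ker D$, so $D w=0$ for every $w\in W$; in particular $D w_s=0$ for all $s$. The third is that $D$ is $K$-invariant and hence commutes with $\ka_\Delta$, so $D(F_\Delta w_s)=F_\Delta(D w_s)=0$ (equivalently, $F_\Delta w_s=w_{s+1}\in W$ by \eqref{dircoho}, which $D$ also annihilates).

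Granting these, the computation is short. Applying the first identity with $Y=F_1$ to $w_s$ and using $D w_s=0$ gives $(F_1\otimes 1)\cdot w_s=-\tfrac12 D(1\otimes F_1)\cdot w_s$. Multiplying by $(p_2-p_1+s)$ and substituting the first identity of Lemma \ref{1oF_1}, namely $(p_2-p_1+s)(1\otimes F_1)\cdot w_s=(1\otimes F_2)\cdot F_\Delta w_s$, we obtain
$$
(p_2-p_1+s)(F_1\otimes 1)\cdot w_s=-\tfrac12 D\bigl((1\otimes F_2)\cdot F_\Delta w_s\bigr).
$$
Applying the first identity once more, this time with $Y=F_2$, and using $D(F_\Delta w_s)=0$, the right-hand side equals $(F_2\otimes 1)\cdot F_\Delta w_s+\tfrac12(1\otimes F_2)\cdot D(F_\Delta w_s)=(F_2\otimes 1)\cdot F_\Delta w_s$, which is the first claimed identity. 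The second identity is proved in exactly the same way, starting from $(E_2\otimes 1)\cdot w_s=-\tfrac12 D(1\otimes E_2)\cdot w_s$, substituting the second identity of Lemma \ref{1oF_1}, $(p_2-p_1+s)(1\otimes E_2)\cdot w_s=-(1\otimes E_1)\cdot F_\Delta w_s$, and then applying the basic identity with $Y=E_1$.

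There is no real obstacle here: each step is forced by Lemma \ref{1oF_1} together with the elementary facts above. The only point that needs a moment's care is moving $D$ past $F_\Delta$ when it acts on $F_\Delta w_s$, which is precisely the $K$-invariance of $D$; and one should note that for $s=p_1-p_2$ the coefficient $p_2-p_1+s$ vanishes while $F_\Delta w_{p_1-p_2}=0$, so the asserted identities hold trivially ($0=0$) in that case as well.
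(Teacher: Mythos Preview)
Your proof is correct and follows essentially the same route as the paper: both use the identity $D(1\otimes Y)+(1\otimes Y)D=-2(Y\otimes 1)$ together with $Dw=0$ on $W$ to get $(Y\otimes 1)\cdot w=-\tfrac12 D(1\otimes Y)\cdot w$, then invoke Lemma~\ref{1oF_1} and apply the same identity once more (now to $F_\Delta w_s\in W$). Your write-up is simply more explicit than the paper's about why $D(F_\Delta w_s)=0$ and about the boundary case $s=p_1-p_2$.
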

\begin{proof}
Since $D(1 \otimes F_i) + (1 \otimes F_i)D = - 2 F_i \otimes 1$, for $i = 1, 2 $, then for all $w \in W$ we get
$$
(F_i \otimes 1) \cdot w  = -\frac{1}{2}(D(1 \otimes F_i) + (1 \otimes F_i)D) \cdot w = - \frac{1}{2}D(1 \otimes F_i) \cdot w.
$$
From this and from Lemma \ref{1oF_1} it follows that
$$
(p_2 - p_1 + s)(F_1 \otimes 1) \cdot w_s = (F_2 \otimes 1) \cdot F_{\Delta}w_s.
$$
Similarly, using $D(1 \otimes E_2) + (1 \otimes E_2)D = - 2 E_2 \otimes 1$ and Lemma \ref{1oF_1} we get
$$
(p_2 - p_1 + s)(E_2 \otimes 1) \cdot w_s = - (E_1 \otimes 1) \cdot F_{\Delta}w_s.
$$
\end{proof}

Similarly as in the previous section, we would like to replace the second highest weight vector of the $K$--types in $U(\g)$ with the highest weight vectors of other $K$--types in $U(\g)$.

\begin{prop}\label{FF2}
For $n,m \in \mathbb{N}_{0}$ and $s \in \{1, 2, \cdots, p_1 - p_2 - 1\}$ we have
$$
(F E_{1}^{n} F_{2}^{m} \otimes F_2) \cdot w_s \in \Span_{\mathbb{C}}\{ (E_{1}^{n-1}F_{2}^{m-1} \otimes F_2) \cdot w_s, (E_{1}^{n} F_{2}^{m} \otimes F_2) \cdot w_{s+1}\},
$$
where $E_{1}^{-1} = F_{2}^{-1} = 1$.
\end{prop}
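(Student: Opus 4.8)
The plan is to imitate the proof of Proposition~\ref{l1}, but to carry the computation out inside the module $X\otimes S$ itself, so that every ``$\in\,\cdots+Z$'' there becomes a genuine equality of vectors, or membership in the claimed two–dimensional span. First I would split, in $U(\g)$,
\[
F E_1^n F_2^m=(\text{ad}F)(E_1^n F_2^m)+E_1^n F_2^m F,
\]
so that $(F E_1^n F_2^m\otimes F_2)\cdot w_s=((\text{ad}F)(E_1^n F_2^m)\otimes F_2)\cdot w_s+(E_1^n F_2^m F\otimes F_2)\cdot w_s$, and treat the two summands separately.

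For the first summand there is essentially nothing new: the computation is the one in the proof of Proposition~\ref{l1}, the only change being that the $Z$–inclusions there are taken over by exact identities on $W=H^D(X)$. Indeed, $C$, $C^{-}$ and $D$ lie in $\mathcal{I}$ (Lemma~\ref{halfdirac}) and $\mathcal{I}$ annihilates $H^D(X)$, so $C\cdot w=C^{-}\cdot w=D\cdot w=0$ for $w\in W$; and since $W\subseteq X\otimes\Span_{\mathbb{C}}\{E_1,E_2\}$ while $F_1F_2$ and $E_1E_2$ kill both $E_1$ and $E_2$ in $S$, the operators $1\otimes F_1F_2$ and $1\otimes E_1E_2$ annihilate $W$. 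These replace the inclusions $C\otimes W\subseteq Z$, $C^{-}\otimes W\subseteq Z$ and \eqref{pp}. The replacements of $(F_1\otimes1)\cdot w_s$ and $(E_2\otimes1)\cdot w_s$ by $(F_2\otimes1)\cdot w_{s+1}$ and $-(E_1\otimes1)\cdot w_{s+1}$ come from Lemma~\ref{F_1o1} (the exact analogue of Corollary~\ref{FiEi}), while the scalars produced by the $H_2$, $E$ and Cartan terms are read off from the action of $h_{1_\Delta}$, $h_{2_\Delta}$, $E_\Delta$, $F_\Delta$ on the $w_s$ given in \eqref{dircoho}, from $\alpha(H_2)=-\tfrac32-\tfrac12 E_1F_1-E_2F_2$ and $\alpha(E)=-\tfrac12 E_1F_2$, and from the explicit action of $C(\p)$ on $S$. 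As in Proposition~\ref{l1}, the first summand then lies in $\Span_{\mathbb{C}}\{(E_1^n F_2^m\otimes F_2)\cdot w_{s+1},\,(E_1^{n-1}F_2^{m-1}\otimes F_2)\cdot w_s\}$. (Alternatively one can simply apply the natural map $\mathcal{A}\otimes_{\mathcal{B}}W\to X\otimes S$, $a\otimes w\mapsto a\cdot w$, which vanishes on $Z$, to the conclusion of Proposition~\ref{l1}.)

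For the second summand I would compute directly: $(E_1^n F_2^m F\otimes F_2)\cdot w_s=(E_1^n F_2^m\otimes1)\cdot\bigl((F\otimes F_2)\cdot w_s\bigr)$. From the definition of $w_s$ together with $F_2\cdot E_1=0$, $F_2\cdot E_2=-2$ in $S$ and $F\,v_{p_1,p_2}^{\,p_1-p_2-2(s-1)}=v_{p_1,p_2}^{\,p_1-p_2-2s}$ from \eqref{kaction}, one gets $(F\otimes F_2)\cdot w_s=2(p_1-p_2-s+1)\,v_{p_1,p_2}^{\,p_1-p_2-2s}\otimes1$; since $(1\otimes F_2)\cdot w_{s+1}=2(p_1-p_2-s)\,v_{p_1,p_2}^{\,p_1-p_2-2s}\otimes1$ and $p_1-p_2-s\neq0$ (because $s\le p_1-p_2-1$), this gives
\[
(E_1^n F_2^m F\otimes F_2)\cdot w_s=\frac{p_1-p_2-s+1}{p_1-p_2-s}\,(E_1^n F_2^m\otimes F_2)\cdot w_{s+1},
\]
a scalar multiple of the second of the two permitted vectors. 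Adding the two contributions yields the statement, with the convention $E_1^{-1}=F_2^{-1}=1$.

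The argument is essentially routine bookkeeping, and the only real risk is mistracking scalar factors through \eqref{calc} and the Clifford multiplications on $S$; there is no genuinely new obstacle beyond Proposition~\ref{l1}, the one new ingredient being the term $(E_1^n F_2^m F\otimes F_2)\cdot w_s$ coming from $F E_1^n F_2^m\neq(\text{ad}F)(E_1^n F_2^m)$, which turns out to be harmless.
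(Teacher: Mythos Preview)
Your proposal is correct and follows essentially the same route as the paper's proof. The paper also uses $FE_1^nF_2^m=(\text{ad}F)(E_1^nF_2^m)+E_1^nF_2^mF$ and expands into the same five terms (see \eqref{1}), then evaluates each one directly in $X\otimes S$ using Lemma~\ref{F_1o1}, the explicit Clifford action on $S$, and the formulas \eqref{kaction}; your computation of $(F\otimes F_2)\cdot w_s$ and of $(1\otimes F_2)\cdot w_{s+1}$ reproduces the paper's identity \eqref{5} verbatim. The only cosmetic difference is that you package the first four terms as ``the image of Proposition~\ref{l1} under the action map'' before unpacking them, whereas the paper writes all five terms at once and then treats them individually.
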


\begin{proof}
From $\text{ad}(F)(E_{1}^{n} F_{2}^{m}) = F E_{1}^{n} F_{2}^{m} - E_{1}^{n} F_{2}^{m}F$, in a similar way to how it was done in \eqref{(1)}, we get

\begin{align}\label{1}
 (F E_{1}^{n} F_{2}^{m} \otimes F_2) \cdot w_s & = nm (E_{1}^{n-1} F_{2}^{m-1} H_2 \otimes F_2) \cdot w_s \\
& - mn(m-1)(E_{1}^{n-1} F_{2}^{m-1} \otimes F_2) \cdot w_s \notag \\
& + n (E_{1}^{n-1} F_{2}^{m} E_2 \otimes F_2) \cdot w_s  - m (E_{1}^{n} F_{2}^{m-1} F_1 \otimes F_2) \cdot w_s \notag \\
& + (E_{1}^{n} F_{2}^{m} F \otimes F_2) \cdot w_s. \notag
\end{align}

Using lemma \ref{F_1o1} and \eqref{calc}, we get for $s \in \{ 1, 2, \cdots, p_1 - p_2 -1 \}$

\begin{align}\label{2}
(p_2 - p_1 + s) (E_{1}^{n} F_{2}^{m-1} F_1 \otimes F_2) \cdot w_s  & = (E_{1}^{n} F_{2}^{m} \otimes F_2) \cdot w_{s+1}  \\
(p_2 - p_1 + s)(E_{1}^{n-1} F_{2}^{m} E_2 \otimes F_2) \cdot w_s  & = - (E_{1}^{n-1} F_{2}^{m} E_1 \otimes F_2) \cdot w_{s+1} \notag\\
& = - (E_{1}^{n-1}(E_{1} F_{2}^{m} - m F_{2}^{m-1} E) \otimes F_2) \cdot w_{s+1} \notag \\
 & = - (E_{1}^{n} F_{2}^{m} \otimes F_2) \cdot w_{s+1} \notag \\
& + m (E_{1}^{n-1} F_{2}^{m-1} E \otimes F_2) \cdot w_{s+1}. \notag
\end{align}
Furthermore, for $s \in \{ 1,2, \cdots, p_1 - p_2 - 1\}$ the following identity holds
\begin{align*}
E \otimes F_{2} \cdot w_{s+1} & = E \otimes F_2 \cdot (v_{p_1, p_2}^{p_1 - p_2 - 2(s+1)} \otimes E_1 - (p_1 - p_2 - s)v_{p_1, p_2}^{p_1 - p_2 - 2s} \otimes E_2) \\
& = 2 (p_1 - p_2 - s) E v_{p_1, p_2}^{p_1 - p_2 - 2s} \otimes 1 \\
& = 2 (p_1 - p_2 - s)s(p_1 - p_2 - s + 1) v_{p_1, p_2}^{p_1 - p_2 - 2(s-1)} \otimes 1 \\
& = (p_1 - p_2 - s)s \cdot (1 \otimes F_2) \cdot w_s.
\end{align*}
From this we get
\begin{align}\label{3}
(p_2 - p_1 + s)(E_{1}^{n-1} F_{2}^{m} E_2 \otimes F_2) \cdot w_s & = - (E_{1}^{n} F_{2}^{m} \otimes F_2) \cdot w_{s+1} \\
& + ms(p_1 - p_2 - s) (E_{1}^{n-1} F_{2}^{m-1} \otimes F_2) \cdot w_s. \notag
\end{align}
Now, from $H_2 = h_1 + 2 h_2$ and \eqref{kaction} it follows that
\begin{align}\label{4}
(H_2  \otimes F_2) \cdot w_s & = 2(p_1 - p_2 - (s - 1)) H_2 v_{p_1, p_2}^{p_1 - p_2 - 2(s-1)} \otimes 1  \\
& = 2(p_1 - p_2 - (s - 1))(p_1 + 2 p_2 + s-1) v_{p_1, p_2}^{p_1 - p_2 - 2(s-1)} \otimes 1 \notag \\
& = (p_1 + 2p_2 + s-1) (1 \otimes F_2) \cdot w_s. \notag
\end{align}
Furthermore, we have
\begin{align*}
(F \otimes F_2) \cdot w_s  & = 2 (p_1 - p_2 - (s-1)) F v_{p_1, p_2}^{p_1 - p_2 - 2(s-1)} \otimes 1 \\
& = 2 (p_1 - p_2 - (s-1)) v_{p_1, p_2}^{p_1 - p_2 - 2s} \otimes 1 \\
& = \frac{p_1 - p_2 - (s-1)}{p_1 - p_2 - s} (1 \otimes F_2) \cdot w_{s+1},
\end{align*}
from where we get
\begin{equation}\label{5}
(p_1 - p_2 - s) (F \otimes F_2) \cdot w_s = (p_1 - p_2 - (s-1)) (1 \otimes F_2) \cdot w_{s+1}.
\end{equation}
Finally, from \eqref{1}, \eqref{2}, \eqref{3}, \eqref{4} and \eqref{5} we get
\begin{align*}
& (p_2 - p_1 + s) (F E_{1}^{n} F_{2}^{m} \otimes F_2) \cdot w_s \\
 = & (p_2 - p_1 + s)mn(p_1 + 2p_2 - m) (E_{1}^{n-1} F_{2}^{m-1} \otimes F_2) \cdot w_s \\
& -(n + m + p_1 - p_2 - (s-1))(E_{1}^{n} F_{2}^{m} \otimes F_2) \cdot w_{s+1}.
\end{align*}
Therefore, for $s \in \{ 1,2, \cdots, p_1 - p_2-1 \}$ we have $$
(F E_{1}^{n} F_{2}^{m} \otimes F_2) \cdot w_s \in \Span_{\mathbb{C}}\{ (E_{1}^{n-1}F_{2}^{m-1} \otimes F_2) \cdot w_s, (E_{1}^{n} F_{2}^{m} \otimes F_2) \cdot w_{s+1}\}.
$$
\end{proof}
A similar result holds for $s=p_1 - p_2$.
\begin{lemma}\label{FF2p}
Let $n$ and $m$ be nonnegative integers. Then
$$
(F E_{1}^{n} F_{2}^{m} \otimes F_2) \cdot w_{p_1 - p_2} \in \Span_{\mathbb{C}}\{ (E_{1}^{n-1}F_{2}^{m-1} \otimes F_2) \cdot w_{p_1 - p_2}, (E_{1}^{n} F_{2}^{m} \otimes F_1) \cdot w_{p_1 - p_2}\}.
$$
where $E_{1}^{-1} = F_{2}^{-1} = 1$.
\end{lemma}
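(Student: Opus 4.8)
The plan is to follow the proof of Proposition~\ref{l2} --- which is the $\mathcal{A}\otimes_{\mathcal{B}}W$ counterpart of exactly this statement --- transported to the genuine action on $X\otimes S$, in the same way that Proposition~\ref{FF2} is the $X\otimes S$-version of Proposition~\ref{l1}. First I would start from $FE_1^n F_2^m=(\text{ad}F)(E_1^n F_2^m)+E_1^n F_2^m F$ together with $(\text{ad}F)(E_1^n F_2^m)=nE_1^{n-1}E_2 F_2^m-mE_1^n F_2^{m-1}F_1$, and move $H_2$ and $E_2$ past the powers of $F_2$ using \eqref{calc}; this is the expansion \eqref{1}, which holds for every $s$, and at $s=p_1-p_2$ it writes $(FE_1^n F_2^m\otimes F_2)\cdot w_{p_1-p_2}$ as a combination of $(E_1^{n-1}F_2^{m-1}H_2\otimes F_2)\cdot w_{p_1-p_2}$, $(E_1^{n-1}F_2^{m-1}\otimes F_2)\cdot w_{p_1-p_2}$, $(E_1^{n-1}F_2^m E_2\otimes F_2)\cdot w_{p_1-p_2}$, $(E_1^n F_2^{m-1}F_1\otimes F_2)\cdot w_{p_1-p_2}$ and the extra term $(E_1^n F_2^m F\otimes F_2)\cdot w_{p_1-p_2}$.

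The second step records the relevant actions on the lowest weight vector $w_{p_1-p_2}$, using \eqref{kaction} on the $X$-factor and the Clifford values $F_2\cdot E_1=0$, $F_2\cdot E_2=F_1\cdot E_1=-2$, $F_1\cdot E_2=0$ already used in the proof of Lemma~\ref{1oF_1}. Direct computation gives $(H_2\otimes F_2)\cdot w_{p_1-p_2}=(2p_1+p_2-1)(1\otimes F_2)\cdot w_{p_1-p_2}$ (because $v_{p_1,p_2}^{-(p_1-p_2)+2}$ has $H_2$-weight $2p_1+p_2-1$), $(F\otimes F_2)\cdot w_{p_1-p_2}=-(1\otimes F_1)\cdot w_{p_1-p_2}$ (because $Fv_{p_1,p_2}^{-(p_1-p_2)}=0$ and $Fv_{p_1,p_2}^{-(p_1-p_2)+2}=v_{p_1,p_2}^{-(p_1-p_2)}$), and, via $E\otimes 1=E_\Delta-1\otimes\alpha(E)$ with $E_\Delta w_{p_1-p_2}=(p_1-p_2-1)w_{p_1-p_2-1}$ and Lemma~\ref{1oF_1} at $s=p_1-p_2-1$, that $(E\otimes F_1)\cdot w_{p_1-p_2}=(p_2-p_1)(1\otimes F_2)\cdot w_{p_1-p_2}$. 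Pulling the left factors $E_1^{n-1}F_2^{m-1}$ and $E_1^n F_2^m$ through the tensor-product multiplication turns the $H_2$-term into $(2p_1+p_2-1)(E_1^{n-1}F_2^{m-1}\otimes F_2)\cdot w_{p_1-p_2}$ and the $F$-term into $-(E_1^n F_2^m\otimes F_1)\cdot w_{p_1-p_2}$, already one of the two target vectors.

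The new feature relative to Proposition~\ref{FF2} is that $F_\Delta w_{p_1-p_2}=0$, so the ``$w_{s+1}$''-term has disappeared and the two remaining terms (those in which $E_2$, resp.\ $F_1$, hits the $X$-factor) must instead be funnelled into $(E_1^n F_2^m\otimes F_1)\cdot w_{p_1-p_2}$. For the $F_1$-term I would use the genuine analogue of \eqref{anticom}, $(F_1\otimes F_2-F_2\otimes F_1)\cdot w_{p_1-p_2}=0$ --- which holds since $C^-\in\mathcal{I}$ annihilates $W=H^D(X)$ while $1\otimes F_1F_2$ annihilates $w_{p_1-p_2}$, by the commutator identity used to derive \eqref{anticom} --- giving $(E_1^n F_2^{m-1}F_1\otimes F_2)\cdot w_{p_1-p_2}=(E_1^n F_2^m\otimes F_1)\cdot w_{p_1-p_2}$. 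For the $E_2$-term I would use $C\cdot w_{p_1-p_2}=0$, i.e.\ $(E_2\otimes F_2)\cdot w_{p_1-p_2}=-(E_1\otimes F_1)\cdot w_{p_1-p_2}$, together with $F_2^m E_1=E_1F_2^m-mF_2^{m-1}E$ from \eqref{calc} and the formula for $(E\otimes F_1)\cdot w_{p_1-p_2}$ above, to rewrite $(E_1^{n-1}F_2^m E_2\otimes F_2)\cdot w_{p_1-p_2}$ as $-(E_1^n F_2^m\otimes F_1)\cdot w_{p_1-p_2}+m(p_2-p_1)(E_1^{n-1}F_2^{m-1}\otimes F_2)\cdot w_{p_1-p_2}$. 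Substituting the five reductions into \eqref{1} and collecting coefficients yields
\[
(FE_1^n F_2^m\otimes F_2)\cdot w_{p_1-p_2}=nm(p_1+2p_2-m)(E_1^{n-1}F_2^{m-1}\otimes F_2)\cdot w_{p_1-p_2}-(n+m+1)(E_1^n F_2^m\otimes F_1)\cdot w_{p_1-p_2},
\]
where $p_1+2p_2-m=(2p_1+p_2-1)-(m-1)+(p_2-p_1)$, which proves the lemma.

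The main difficulty is organizational rather than conceptual: one has to be sure that at every spot where $E_2$, $F_1$, $F$ or $H_2$ acts on the infinite-dimensional factor $X$ it has first been traded, via $C$, the anticommutation relation, \eqref{kaction} or Lemma~\ref{1oF_1}, for an operator acting only through the Clifford factor, so that the left factor $E_1^n F_2^m$ pulls out unchanged, and that the five numerical coefficients combine exactly as displayed above.
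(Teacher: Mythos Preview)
Your proposal is correct and follows essentially the same approach as the paper: both start from the expansion \eqref{1} at $s=p_1-p_2$, invoke the identities $(E_1\otimes F_1+E_2\otimes F_2)\cdot w_{p_1-p_2}=0$, $(F_1\otimes F_2-F_2\otimes F_1)\cdot w_{p_1-p_2}=0$, $(F\otimes F_2)\cdot w_{p_1-p_2}=-(1\otimes F_1)\cdot w_{p_1-p_2}$, $(E\otimes F_1)\cdot w_{p_1-p_2}=(p_2-p_1)(1\otimes F_2)\cdot w_{p_1-p_2}$ together with \eqref{4} and \eqref{calc}, and arrive at the same explicit formula with coefficients $nm(p_1+2p_2-m)$ and $-(n+m+1)$. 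Your write-up is in fact more detailed, supplying justifications (e.g.\ for the anticommutation identity via $C^-\in\mathcal{I}$ and $(1\otimes F_1F_2)\cdot w_{p_1-p_2}=0$) that the paper simply asserts.
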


\begin{proof}
From the facts $(E_1 \otimes F_1 + E_2 \otimes F_2) \cdot w_{p_1 - p_2} = 0$, $(F_1 \otimes F_2 - F_2 \otimes F_1) \cdot w_{p_1 - p_2} = 0$, $(F \otimes F_2) \cdot w_{p_1 - p_2} = 2 v_{p_1, p_2}^{-p_1 + p_2} \otimes 1 = - (1 \otimes F_1) \cdot w_{p_1 - p_2}$, from \eqref{1}, \eqref{4}, \eqref{calc} and $(E \otimes F_1) \cdot w_{p_1 - p_2} = (p_2 - p_1)(1 \otimes F_2) \cdot w_{p_1 - p_2}$ 
we get
\begin{align*}
(F E_{1}^{n} F_{2}^{m} \otimes F_2) \cdot w_{p_1 - p_2}
= & mn(2p_2 + p_1 - m) (E_{1}^{n-1} F_{2}^{m-1} \otimes F_2) \cdot w_{p_1 - p_2} \\
 & - (m+n + 1) (E_{1}^{n} F_{2}^{m} \otimes F_1) \cdot w_{p_1 - p_2}.
\end{align*}
This finishes the proof.
\end{proof}
\begin{lemma}\label{lem}
For $n, m, s \in \mathbb{N}_{0}$ we have
$$
(\text{ad}F)^{s}(E_{1}^{n} F_{2}^{m}) = F^s E_{1}^{n} F_{2}^{m} - \sum_{i = 1}^{s} \binom{s}{i}(\text{ad}F)^{s-i}(E_{1}^{n} F_{2}^{m}) F^{i}.
$$
\end{lemma}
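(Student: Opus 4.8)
The plan is to prove the more general operator identity
$$
F^{s}X=\sum_{i=0}^{s}\binom{s}{i}(\text{ad}F)^{s-i}(X)\,F^{i},
$$
valid for an arbitrary element $X$ of the associative algebra $U(\g)$; isolating the $i=0$ summand, which equals $(\text{ad}F)^{s}(X)$, then gives exactly the claimed formula with $X=E_{1}^{n}F_{2}^{m}$, so the particular form of $X$ is irrelevant.

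For the proof of the displayed identity I would first record the elementary observation that left multiplication $L_{F}\colon Y\mapsto FY$ and right multiplication $R_{F}\colon Y\mapsto YF$ commute on $U(\g)$, and that $\text{ad}F=L_{F}-R_{F}$; hence $\text{ad}F$ commutes with $R_{F}$ as well, and $L_{F}=\text{ad}F+R_{F}$ is a sum of two commuting operators. The ordinary binomial theorem then yields $L_{F}^{s}=\sum_{i=0}^{s}\binom{s}{i}(\text{ad}F)^{s-i}R_{F}^{i}$, and evaluating both sides on $X$ gives the identity. Alternatively, and this is probably the version I would actually write out, one proves it by induction on $s$: the case $s=0$ is trivial, and for the step one applies $L_{F}$ to the identity for $s$, rewrites $F\cdot(\text{ad}F)^{s-i}(X)=(\text{ad}F)^{s-i+1}(X)+(\text{ad}F)^{s-i}(X)F$ in each term, re-indexes the second of the two resulting sums, and collects coefficients via Pascal's rule $\binom{s}{i}+\binom{s}{i-1}=\binom{s+1}{i}$.

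There is no real obstacle here; the only point demanding a little care is the index bookkeeping in the inductive step, namely the boundary terms $i=0$ and $i=s+1$, which are absorbed using the convention $\binom{s}{-1}=\binom{s}{s+1}=0$. Substituting $X=E_{1}^{n}F_{2}^{m}$ and moving all but the $i=0$ term to the other side then completes the proof of the lemma.
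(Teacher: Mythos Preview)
Your proposal is correct and matches the paper's approach: the paper simply states that the identity follows easily by induction on $s$ for fixed $n,m$, which is exactly your inductive argument. Your additional observation that the identity is an instance of the binomial expansion of $L_{F}^{s}=(\mathrm{ad}\,F+R_{F})^{s}$ for arbitrary $X\in U(\g)$ is a nice streamlining, but not a genuinely different route.
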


\begin{proof}
The proof follows easily by induction on $s$, for fixed $n,m \in \mathbb{N}_{0}$.
\end{proof}

\begin{lemma}\label{FF1p}
For nonnegative integers $n,m$ and $s$ the following identity holds
$$
(F^s E_{1}^{n} F_{2}^{m} \otimes F_1) \cdot w_{p_1 - p_2} = ((\text{ad}F)^{s}(E_{1}^{n} F_{2}^{m}) \otimes F_1) \cdot w_{p_1 - p_2}.
$$
\end{lemma}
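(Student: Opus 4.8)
The plan is to deduce the identity from Lemma~\ref{lem} together with the single relation $(F\otimes F_1)\cdot w_{p_1-p_2}=0$ in $X\otimes S$, which in turn is a one-line computation from \eqref{kaction} and the Clifford action on $S$.

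First I would use Lemma~\ref{lem} to write
\[
(\text{ad}F)^{s}(E_{1}^{n}F_{2}^{m})
= F^{s}E_{1}^{n}F_{2}^{m}
- \sum_{i=1}^{s}\binom{s}{i}\,(\text{ad}F)^{s-i}(E_{1}^{n}F_{2}^{m})\,F^{i},
\]
so the claimed equality is equivalent to
\[
\Bigl(\bigl((\text{ad}F)^{s-i}(E_{1}^{n}F_{2}^{m})\,F^{i}\bigr)\otimes F_1\Bigr)\cdot w_{p_1-p_2}=0
\qquad\text{for }i=1,\dots,s .
\]
Because the two tensor factors of $\mathcal A=U(\g)\otimes C(\p)$ act on different slots of $X\otimes S$, for $i\ge 1$ one has, with $a=(\text{ad}F)^{s-i}(E_{1}^{n}F_{2}^{m})$, the factorization
\[
\bigl(aF^{i}\otimes F_1\bigr)\cdot w_{p_1-p_2}
=\bigl(aF^{i-1}\otimes 1\bigr)\cdot\bigl((F\otimes F_1)\cdot w_{p_1-p_2}\bigr),
\]
so it suffices to prove $(F\otimes F_1)\cdot w_{p_1-p_2}=0$.

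For that final step I would substitute $w_{p_1-p_2}=v_{p_1,p_2}^{-(p_1-p_2)}\otimes E_1-v_{p_1,p_2}^{-(p_1-p_2)+2}\otimes E_2$ and apply $F\otimes F_1$ slotwise. On the first summand, $F$ annihilates the lowest weight vector $v_{p_1,p_2}^{-(p_1-p_2)}$ of the minimal $K$--type by \eqref{kaction}; on the second summand, $F_1$ annihilates $E_2$ in $S$ by the same kind of isotropy relation as the ones displayed for $F_1F_2$ just before Lemma~\ref{E1E2}, so the value $F v_{p_1,p_2}^{-(p_1-p_2)+2}=v_{p_1,p_2}^{-(p_1-p_2)}$ is irrelevant. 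Hence $(F\otimes F_1)\cdot w_{p_1-p_2}=0$, and the lemma follows.

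Everything here is routine; the only mild point of care is keeping track of which tensor slot each factor acts on, and noticing that, after pairing the explicit form of $w_{p_1-p_2}$ with $F$, the vanishing one actually needs is $F_1\cdot E_2=0$ rather than $F_1\cdot E_1=0$.
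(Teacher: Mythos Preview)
Your argument is correct and follows the same route as the paper: apply Lemma~\ref{lem} and kill the terms with a positive power of $F$ using $(F^{i}\otimes F_1)\cdot w_{p_1-p_2}=0$ for $i\ge 1$. The paper states this vanishing in one line, while you reduce to $i=1$ and verify $(F\otimes F_1)\cdot w_{p_1-p_2}=0$ explicitly; note that $F_1\cdot E_2=0$ in $S$ is already implicit in the computation of $(1\otimes F_1)\cdot w_s$ in the proof of Lemma~\ref{1oF_1}.
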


\begin{proof}
From Lemma \ref{lem} it follows that
\begin{align*}
(F^s E_{1}^{n} F_{2}^{m} \otimes F_1) \cdot w_{p_1 - p_2} = & \left( \sum_{i=0}^{s} \binom{s}{i} ((\text{ad}F)^{s-i}(E_{1}^{n} F_{2}^{m})) F^i \otimes F_1 \right) \cdot w_{p_1 - p_2} \\
& = ((\text{ad}F)^{s}(E_{1}^{n} F_{2}^{m}) \otimes F_1) \cdot w_{p_1 - p_2},
\end{align*}
since $F^i \otimes F_1 \cdot w_{p_1 - p_2} = 0$ for $i \geq 1$.
\end{proof}

Now, from Lemmas \ref{FF2}, \ref{FF2p}, \ref{FF1p} and from \eqref{finite_basis} we see that the set
\begin{align*}
B = \{ (E_{1}^{n} F_{2}^{m} \otimes F_2) \cdot w_s, (x \otimes F_1) \cdot w_{p_1 - p_2} \, | \, & s \in \{ 1,2, \cdots, p_1 - p_2 \}, \\ & x \in \{ E_{1}^{n} F_{2}^{m}, (\text{ad}F)E_{1}^{n} F_{2}^{m}, \cdots, \\
& (\text{ad}F)^{n+m}E_{1}^{n} F_{2}^{m} \}, \\ & 0 \leq n \leq k, 0 \leq m \leq l \} \}
\end{align*}
spans the vector space $V_{k,l}$.
Since $B$ has exactly
$$
\sum_{0 \leq n \leq k, 0 \leq m \leq l} (p_1 - p_2) + n + m + 1
$$
elements, which is the dimension of the vector space $V_{k,l}$, $B$ is the basis of $V_{k,l}$. \newline
Since $X \otimes 1 = \sum_{n,m \in \mathbb{N}_{0}} V_{(p_1 + n, p_2 - m)} \otimes 1$, one basis for $X \otimes 1$ is given by
\begin{align*}
& \{ (E_{1}^{n} F_{2}^{m} \otimes F_2) \cdot w_s, (x \otimes F_1) \cdot w_{p_1 - p_2} \, | \, s \in \{ 1,2, \cdots, p_1 - p_2 \}, \\ & x \in \{ E_{1}^{n} F_{2}^{m}, (\text{ad}F)E_{1}^{n} F_{2}^{m}, \cdots, (\text{ad}F)^{n+m}E_{1}^{n} F_{2}^{m} \} \, | \, n, m \in \mathbb{N}_{0} \} \},
\end{align*}
From here, using $(1 \otimes E_1 E_2) \cdot w_s = 0$, we get that one basis for the vector space $X \otimes S$ is given by
\begin{align*}
\{ & (E_{1}^{n} F_{2}^{m} \otimes F_2) \cdot w_s, (E_{1}^{n} F_{2}^{m} \otimes E_1 F_2) \cdot w_s, (E_{1}^{n} F_{2}^{m} \otimes E_2 F_2) \cdot w_s,  (E_{1}^{n} F_{2}^{m} \otimes E_1) \cdot w_s, \\
& (x \otimes F_1) \cdot w_{p_1 - p_2}, (x \otimes E_1 F_1) \cdot w_{p_1 - p_2},  (x \otimes E_2 F_1) \cdot w_{p_1 - p_2}, (x \otimes E_2) \cdot w_{p_1 - p_2} \\
& s = 1, 2, \cdots, p_1 - p_2, \\
& x \in \{ E_{1}^{n} F_{2}^{m}, (\text{ad}F)E_{1}^{n} F_{2}^{m}, \cdots, (\text{ad}F)^{n+m}E_{1}^{n} F_{2}^{m} \, | \, n, m \in \mathbb{N}_{0} \} \}.
\end{align*}

The action map $\phi : \mathcal{A} \otimes W \longrightarrow X \otimes S$
$$
\phi(a \otimes w) = a \cdot w, \quad a \in \mathcal{A}, w \in W
$$
maps the generating system of $\mathcal{A} \otimes_{\mathcal{B}} W$ from Theorem \ref{si} into the basis of the space $X \otimes S$, which implies that generating system is a basis of the vector space $\mathcal{A} \otimes_{\mathcal{B}} W$. Finally, we get:
\begin{theorem}
Let $X$ be the vector space of the nonholomorphic discrete series representation of the group $SU(2,1)$ and let $W$ be the Dirac cohomology of $X$. Then the
$(\mathcal{A}, \tilde{K})$--module $\text{Ind}_{D}(W) = \mathcal{A} \otimes_{\mathcal{B}} W$ is isomorphic to the $(\mathcal{A}, \tilde{K})$--module $X \otimes S$.
\end{theorem}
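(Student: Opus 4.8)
The plan is to realize the desired isomorphism concretely as the action map and then deduce bijectivity from the two explicit descriptions already in hand. Define $\phi\colon\mathcal{A}\otimes W\to X\otimes S$ by $\phi(a\otimes w)=a\cdot w$. First I would check that $\phi$ annihilates the subspace $Z=\Span_{\mathbb{C}}\{ab\otimes w-a\otimes bw\mid a\in\mathcal{A},\,b\in\mathcal{B},\,w\in W\}$: this is immediate, since $\phi(ab\otimes w)=ab\cdot w=a\cdot(bw)=\phi(a\otimes bw)$, where one uses that $W$ is a $\mathcal{B}$-submodule of $X\otimes S$ --- indeed $\mathcal{I}$ acts on $W$ by zero, while $\ka_{\Delta}$ and $\mathcal{A}^{K}$ preserve $W$, which is a full $\tilde K$-isotypic component of $X\otimes S$. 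Hence $\phi$ factors through $\bar\phi\colon\mathcal{A}\otimes_{\mathcal{B}}W\to X\otimes S$, which is a morphism of $(\mathcal{A},\tilde K)$-modules: it is $\mathcal{A}$-linear because on both sides $\mathcal{A}$ acts by multiplication on the factor coming from $\mathcal{A}$, and it is $\tilde K$-equivariant by the compatibility of the $\mathcal{A}$- and $\tilde K$-actions in the category $\mathcal{M}(\mathcal{A},\tilde K)$ of \cite{PR}.

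The crux is that Sections $4$ and $5$ have been arranged so that the generating system of $\mathcal{A}\otimes_{\mathcal{B}}W$ in Theorem \ref{si} and the basis of $X\otimes S$ displayed at the end of Section $5$ are written in term-by-term parallel form: replacing each symbol $(y_1\otimes y_2)\otimes w_s$ by $(y_1\otimes y_2)\cdot w_s$ turns the first list into the second, with identical indexing sets $s\in\{1,\dots,p_1-p_2\}$, $n,m\in\mathbb{N}_0$, and $x\in\{E_1^nF_2^m,(\text{ad}F)E_1^nF_2^m,\dots,(\text{ad}F)^{n+m}E_1^nF_2^m\}$. Thus $\bar\phi$ carries the generating system of Theorem \ref{si} bijectively onto this basis. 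Two conclusions follow at once. First, the image of $\bar\phi$ contains a basis of $X\otimes S$, so $\bar\phi$ is surjective. Second, any linear relation among the generating elements of $\mathcal{A}\otimes_{\mathcal{B}}W$ would map under $\bar\phi$ to a linear relation with the same coefficients among distinct basis vectors of $X\otimes S$, forcing all coefficients to vanish; hence the generating system of Theorem \ref{si} is linearly independent and is in fact a basis of $\mathcal{A}\otimes_{\mathcal{B}}W$. A bijection carrying a basis onto a basis is a linear isomorphism, and being a morphism of $(\mathcal{A},\tilde K)$-modules it is an isomorphism of $(\mathcal{A},\tilde K)$-modules, as asserted.

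I do not expect a genuine obstacle at this final step: all the difficulty has already been absorbed into Section $4$ (showing that Theorem \ref{si} really is a generating system, via Propositions \ref{ug}, \ref{l1}, \ref{l2} and the elimination arguments) and into Section $5$ (showing that the matching list of vectors $(E_1^nF_2^m\otimes y)\cdot w_s$ and $(x\otimes F_1)\cdot w_{p_1-p_2}$ is a basis of $X\otimes S$, which rests on the basis \eqref{finite_basis} of $V_{k,l}$ coming from \cite{K} together with Lemmas \ref{FF2}, \ref{FF2p}, \ref{FF1p}). The only care needed now is bookkeeping: to verify that the two index sets coincide exactly, so that $\bar\phi$ is a true bijection between the generating system and the basis, with neither extra nor missing generators; once that is confirmed the theorem is proved.
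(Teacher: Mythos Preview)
Your proposal is correct and follows essentially the same approach as the paper: define the action map $\phi(a\otimes w)=a\cdot w$, observe that it factors through $\mathcal{A}\otimes_{\mathcal{B}}W$, and use that it sends the generating system of Theorem~\ref{si} onto the basis of $X\otimes S$ constructed in Section~5, whence the generating system is itself a basis and $\bar\phi$ is an isomorphism. The paper states this in a single sentence; you have simply spelled out the factoring through $Z$ and the $(\mathcal{A},\tilde K)$-equivariance more explicitly.
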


\begin{remark}
The algebra $U(\ka_{\Delta})(C(\p)^{K} + \mathcal{I})$ is a proper subalgebra of the algebra $U(\ka_{\Delta}) \mathcal{A}^{K}$, but it is still not clear if nonholomorphic discrete series representations of the group $SU(2,1)$ can be constructed via intermediate Dirac induction. 
\end{remark}

\vspace{.2in}

\end{document}